\documentclass[12]{amsproc}
\usepackage[colorlinks=true,linkcolor=blue,citecolor=red]{hyperref} %

\usepackage{amsfonts,amsmath,amssymb}

\usepackage{graphicx}
\usepackage[utf8]{inputenc}
\usepackage{mathrsfs}

\usepackage{tikz}
\usetikzlibrary{ intersections,calc,positioning,arrows, shapes,quotes,math}



\newcommand{\R}{{\Bbb R}}

\def\half#1#2{\begin{matrix}\frac{#1}{#2}\end{matrix}}
\DeclareMathOperator{\Equ}{E}
\DeclareMathOperator{\Fix}{Fix}

\newtheorem{theorem}{\rm\bf Theorem}[section]
\newtheorem{proposition}[theorem]{\rm\bf Proposition}
\newtheorem{lemma}[theorem]{\rm\bf Lemma}
\newtheorem{corollary}[theorem]{\rm\bf Corollary}

\newtheorem*{theorem*}{Theorem}
\newtheorem*{theorem 1}{\rm\bf Proposition 1}
\newtheorem*{theorem 2}{\rm\bf Proposition 2}

\theoremstyle{definition}

\theoremstyle{remark}

\newtheorem{example}[theorem]{\rm\bf Example}


\begin{document}

\title{Global stability and persistence of complex foodwebs}




\author[1]{V. Kozlov}
\address{Department of Mathematics, Link\"oping University} \email{vladimir.kozlov@liu.se}           
\author{V. Tkachev}
\address{Department of Mathematics, Link\"oping University}
\email{vladimir.tkatjev@liu.se}           
\author{S. Vakulenko}
\address{St. Petersburg National Research University of Information Technologies, Russia}
\email{vakulenfr@gmail.com}
\author{U. Wennergren}
\address{Department of Physics, Chemistry, and Biology, Link\"oping University}
\email{uno.wennergren@liu.se}

\date{Received: date / Accepted: date}

\maketitle

\begin{abstract}
  We develop a novel approach to study the global behaviour of large foodwebs for ecosystems  where several species share multiple resources. The  model extends and generalize some previous works and takes into account self-limitation. Under certain conditions, we establish the global convergence and persistence of solutions.

\keywords{Complex foodwebs\and Global stability\and Persistence\and Self-limitation\and General response function\and Multiple resources}

\end{abstract}

\maketitle

\section{Introduction} \label{intro}

To mathematically show the existence and stability of large foodwebs, large and complex as foodwebs in nature, is still one of the key problems in theoretical ecology. A specific part of this theoretical issue is that many species can share just a few resources (for example ocean ecosystems including thousands of phytoplankton species) yet the competitive exclusion principle \cite{Hardin,Volterra}  asserts that such foodwebs should not exist. To partly explain that paradox \cite{Huis1999} showed that a system consisting of a single resource and three species can support chaotic dynamics where all species coexist. Another explanation of the paradox was proposed in \cite{Roy} where self-limitation effects has been taken into account.

In this paper we add complexity to the work of \cite{Huis1999} and \cite{Roy} by extending the dynamical equations considered in \cite{Roy} with self-limitation effects \cite{Roy,Alles,Alles1,Alles2,KVU16,KVU17a})  (see also a turbidostat model in \cite{Smith2003}). We obtain a complete description of the large time behaviour of the system. In particular, we explore the range in the parameter space that leads the system to the global stable equilibrium point. Furthermore,  we show that if the self-limitations exceed some critical values then the system exhibits either global stability or persistence, see Corollaries~\ref{cor1} and ~\ref{cor2}.

Traditionally, the Lyapunov function approach is used to establish global stability, see a recent review in \cite{Hsu05}. In our case, however, an explicit information about equilibrium points  is not available. Instead, we transform our problem to a finite dimensional nonlinear fixed point problem for an appropriate non-increasing operator. We show that the asymptotic behaviour of a generic solution to the initial  problem is well controlled by iterations of the introduced operator. This allows us  to derive explicit a priori estimates (see Theorem~\ref{th1} below) and the global stability.

The paper is organized as follows. In Section~\ref{model}, we present the model with self-limitations and in Section~\ref{sec:prelim} we obtain some preliminary results. We review some elementary facts on period-two-points of non-increasing maps in Section~\ref{sec:period} and discuss the structure and stratification of equilibrium points in Section~\ref{sec:equil}. In particular, in Section~\ref{sec:fixed} we consider the so-called special equilibrium points which are significant for the large time behaviour of the original dynamical system. Here we also define the corresponding finite dimensional fixed point problem. To study its dynamics and convergence we need to suitably polarize the fixed point problem. This allows us to establish bilateral estimates for the corresponding $\omega$-limit set. The main result of this section is contained in Corollary~\ref{cor1a},  which gives a sufficient condition for the existence of a unique fixed point.   In Section~\ref{sec:main} we return to the main dynamical system formulate and prove the main results on the large behaviour of the original dynamic system. In particular, we obtain some explicit conditions when the system obeys  strong persistence. Finally, in Section~\ref{sec:dis} we briefly discuss our results and relate them to some previous research.

\section{The model}  \label{model}

Given $x,y\in \mathbb{R}^n$ we  use the standard vector order relation: $x\le y$ if $x_i\le y_i$ for all $1\le i\le n$,
$x< y$  if $x\le y$ and $x\ne y$, and $x\ll y$ if $x_i< y_i$ for all $i$; $\R_{+}^n$ denotes the nonnegative cone $ \{x\in\R^n:x\ge 0\}$ and for $a\le b$, $a,b\in\R^n$
$$
[a,b]=\{x\in \R^{n}:a\le x\le b\}
$$
is the closed box with vertices at $a$ and $b$.

We consider the model where the population dynamics of $M$ species competing for $m$ complementary resources is governed by chemostat-like equations
\begin{align}
     \frac{dx_j}{dt}&=x_j ( \phi_j(v)- \mu_j -  \gamma_{j} \; x_j), \quad j=1,\dots, M
    \label{HX1}\\
          \frac{dv_i}{dt}&=D_i(S_i -v_i)   -  \sum_{j=1}^M c_{ ij} \; x_j \; \phi_j(v), \quad i=1,\dots, m
    \label{HV1}\\
 \label{Idata}
x(0)&\gg 0, \qquad
0\le v(0)\le S:=(S_1, ..., S_m),
\end{align}
Here $x_j(t)$ is species $j$ abundance  and $v_i(t)$ is the concentration of resource $i$ at time $t$, $\mu_j$ are the species mortalities, $S_i$ is the supply of resource $i$, $c_{ij} >0$ is the content  of resource $i$ in species $j$ (growth yield constants), $D_i$ is the rate of exchange of resource $i$, resource turnover (or dilution) rate),  $\gamma_{j} >0$ is a self-limitation constant of species $j$. We shall assume that the specific growth rates  $\phi_j$ are bounded Lipschitz functions subject to  the following standard conditions:
\begin{align}
           \phi_j(v)& =0 \,\,\Leftrightarrow\,\, v \in \partial \R^m_{+};
\label{MM2b}\\
    \phi_j(v)& \text{ is a nondecreasing  function of each $v_i$}.
\label{MM5}
     \end{align}
 The most relevant for biological applications example of the specific growth functions $\phi_j$ is given by the Monod equation and Liebig's ‘law of the minimum’
\begin{equation}
\label{Liebm}
\phi_j(v) =\min\left(\frac{r_jv_1}{K_{1j} +  v_1}, \ldots,  \frac{r_jv_m}{K_{mj} +  v_m} \right),
\end{equation}
where $r_j$ is the maximum specific growth rate of species $j$ and $K_{ij}$ is the half-saturation constant for resource $i$ of species $j$. Obviously, the functions \eqref{Liebm} meet the above conditions.

In absence of self-limitation ($\gamma_j=0$), the present model  naturally appears in the bioengineering context \cite{Armstrong80} and was  extensively studied for $m\le 2$ resources for both equal resource turnover rates $\mu_j=D_i=D$ in \cite{HsuHW77}, \cite{Huis1999}, \cite{HuWe2001} and for different the removal rates $\mu_j$ in \cite{Hsu78}, \cite{Hsu05}, \cite{LiSmith}, \cite{Tilman1980}, see also a recent review in \cite{Smith2003review}. For a single resource $m=1$, the dynamics of the standard model in absence of self-limitation is completely determined by the break-even concentrations $R_j$ defined as $\phi_j(R_j)=\mu_j$, see \cite{Armstrong80}, \cite{Hsu05}. For example, if the \textit{lowest break-even concentration}
\begin{equation}\label{rstarr}
R^*=\min\{\,R_1\,\ldots, R_m\}
\end{equation}
achieves on a single species $k$ then 
$$
\lim\limits_{t\to \infty} v_1(t)=R^*, \quad \lim\limits_{t\to \infty} x_{k}(t)=\frac{1}{\mu_1}(S_1-R^*)
$$ 
while $\lim_{t\to \infty} x_j(t)=0$  for all $j\ne k$. However, if $m\ge 3$, the behaviour becomes much more involved. Recent numerical simulations \cite{Huis1999}, \cite{HuWe2001} strongly support the possible chaos scenario for $(m,M)=(3,6)$ or $(5,6)$. An important step was done by Li \cite{Li2001} who established the existence of the limit cycle for $m=M=3$.

\section{Preliminaries}\label{sec:prelim}
In what follows, we shall assume that $\gamma_j>0$.

\begin{proposition}
\label{pro:1}
Solution $(x(t), v(t))$ of \eqref{HX1}, \eqref{HV1}, \eqref{Idata}  is well-defined and bounded for all  $t\ge 0$ and
\begin{align}
 &0\le x_j(t) \le x_j(0)  \left(e^{(\mu_j-\phi_j(S)) t} +\frac{1-e^{(\mu_j-\phi_j(S)) t}}{\phi_j(S) - \mu_j}\gamma_j x_j(0)\right)^{-1},
\label{estx}
\\
& 0\le v_i(t) \le    S_i (1- e^{-D_it}) +   v_i(0)  e^{-D_it},
\label{estv}
     \end{align}
If $\phi_j(S)=\mu_j$,  \eqref{estx} should be replaced by $0\le x_j(t) \le\frac{x_j(0)}{1+ \gamma_j x_j(0)t}$. In particular, $[0,S]$
is an invariant subset. Furthermore, if $\phi_j(S)\le \mu_j$ for some $i$ then $\lim_{t\to \infty}x_j(t)=0.$
\end{proposition}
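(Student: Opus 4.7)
My plan is to decouple the system: first obtain a self-contained a priori bound on $v$ that does not involve $x$, then feed $v(t) \le S$ into the $x$-equation and use monotonicity of $\phi_j$ to reduce it to a Bernoulli-type differential inequality. Positivity of $x_j$ and $v_i$ is handled separately by inspecting the vector field on $\partial \R_+^{M+m}$: the hyperplane $\{x_j = 0\}$ is invariant because the right-hand side of \eqref{HX1} factors through $x_j$, so uniqueness of solutions combined with $x_j(0) > 0$ keeps $x_j(t) > 0$ for as long as the solution exists; and on the face $\{v_i = 0\}$, condition \eqref{MM2b} forces every $\phi_j(v)$ to vanish, so $\dot v_i|_{v_i = 0} = D_i S_i \ge 0$, which prevents escape through that face.

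For the upper bound on $v_i$, I would drop the manifestly nonpositive consumption term in \eqref{HV1} to get $\dot v_i \le D_i(S_i - v_i)$, then integrate with the integrating factor $e^{D_i t}$; this reproduces \eqref{estv} exactly, and in particular gives the invariance of the box $[0,S]$. Now by monotonicity \eqref{MM5}, $\phi_j(v(t)) \le \phi_j(S)$, and \eqref{HX1} becomes the logistic differential inequality $\dot x_j \le x_j\bigl((\phi_j(S) - \mu_j) - \gamma_j x_j\bigr)$. The Bernoulli substitution $y_j = 1/x_j$ (permissible because $x_j > 0$) converts this into the linear inequality $\dot y_j \ge (\mu_j - \phi_j(S))\, y_j + \gamma_j$. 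Multiplying by $e^{-(\mu_j - \phi_j(S)) t}$ and integrating from $0$ to $t$ yields precisely \eqref{estx} when $\phi_j(S) \ne \mu_j$; in the degenerate case $\phi_j(S) = \mu_j$ the inequality simplifies to $\dot y_j \ge \gamma_j$, which integrates to $y_j(t) \ge 1/x_j(0) + \gamma_j t$ and gives the stated fallback bound on $x_j$.

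Global existence on $[0,\infty)$ then follows from the standard continuation principle: the right-hand sides are locally Lipschitz, and the a priori bounds just derived rule out blow-up in finite time. The last assertion is extracted directly from \eqref{estx}: if $\phi_j(S) < \mu_j$ the denominator contains the exponentially growing term $e^{(\mu_j - \phi_j(S)) t}$ so $x_j(t) \to 0$, while if $\phi_j(S) = \mu_j$ the fallback estimate $x_j(0)/(1 + \gamma_j x_j(0) t)$ delivers the same conclusion. The only delicate point is sequencing, since the $x$-bound relies on already knowing $v(t) \le S$; this is not a genuine obstacle because the $v$-equation admits a comparison insensitive to the precise values of $x$, so no fixed-point or Gronwall coupling between the two estimates is required.
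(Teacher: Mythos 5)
Your proposal is correct and follows essentially the same route as the paper: both establish invariance of $\R_+^M\times[0,S]$ by inspecting the vector field on the boundary (positivity of $x_j$ from the factored form of \eqref{HX1}, positivity of $v_i$ from \eqref{MM2b}, and $v\le S$ by dropping the nonnegative consumption term), and both then use $\phi_j(v)\le\phi_j(S)$ to compare $x_j$ with the logistic ODE $\dot y=y(\phi_j(S)-\mu_j-\gamma_j y)$. The only cosmetic difference is that you solve that comparison ODE explicitly via the Bernoulli substitution $y_j=1/x_j$, whereas the paper just quotes its closed-form solution.
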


\begin{proof}
Note that by \eqref{HX1}, $x_j(t)$ never vanishes unless $x_j(t)\equiv 0$. In particular, by \eqref{HX1}  $x(t)\gg 0$ as long as $x(t)$ is defined.  Furthermore, if $h_i(x,v)$ denote the right hand side of \eqref{HV1} then by \eqref{MM2b} $h_i(x,v)=D_iS_i>0$ for any $v\in \partial \R^m_{+}$, thus $v_i(0)\ge 0$ implies that $v_i(t)>0$ for all admissible $t>0$, see Proposition ~2.1 in \cite{Haddad}. Similarly, $h_i(x,S)< 0$ (unless $x=0$) and $v(0)\le S$ yields $v(t)\le S$, and thus  \eqref{Idata} and \eqref{MM5} imply $0\le \phi_j(v) \le\phi_j(S)$. This proves that $\R^{M}_+\times [0,S]$ is an invariant subset for \eqref{HX1}, \eqref{HV1}, \eqref{Idata} . Furthermore, $x_j(t) \le y_i(t)$, where
$y_i(t)$ is the solution of the Cauchy problem
$$
\frac{dy_i}{dt}=y_i(  \phi_j(S) -\mu_j- \gamma_j y_i),     \quad y_i(0)=x_j(0), \quad 1\le i\le M.
$$
This readily yields \eqref{estx} and    $\sum_{i=1}^M c_{ki} x_j\phi_j(v)\ge0$ yields the upper estimate in \eqref{estv}. Since $(x(t),v(t))$ is a bounded solution, it is well-defined for all $t\ge0$. Finally, if $\phi_j(S)\le \mu_j$ then \eqref{estx} implies $\lim_{t\to\infty}x_j(t)=0$.
\end{proof}

Proposition~\ref{pro:1}  shows that the extinction dynamics of \eqref{HX1}, \eqref{HV1}, \eqref{Idata}  depends on the sign of $\phi_j(S)-\mu_j$: for species $i$ to survive, its   specific growth rate $\phi_j(S)$ at the supply point $S$ must exceed its specific mortality rate $\mu_j$. To eliminate the trivial extinctions, we shall assume in what follows  that the \textit{survivability condition} holds:
\begin{equation}\label{survav}
\quad \phi_j(S)>\mu_j\quad \text{ for all }j.
\end{equation}
For the Monod-Liebig model \eqref{Liebm}, the survivability condition \eqref{survav} is equivalent to $0\ll R^{(j)}\ll S,$ where $R^{(j)}:=(R_{1j},\ldots, R_{mj})$ and $R_{ij}:=\frac{\mu_j}{r_j-\mu_j}K_{ij}$ is the resource requirement of a species $j$ for a resource $i$ \cite{HuWe2001}.

Below we summarize some elementary observations which will be  used throughout the paper.

\begin{lemma}
\label{lem:elem}
Let $f(x)\not\equiv0$ be continuous nonnegative and nondecreasing on $[0,S]$, $f(0)=0$. Then $S-x=f(x)$ has a unique solution $0<x_f<S$. If $f(x)\ge g(x)$ ($f(x)> g(x)$ resp.) are nonnegative  functions then $x_f\le x_g$ ($x_f< x_g$ resp).
\end{lemma}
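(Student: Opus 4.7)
The plan is to rewrite the equation $S-x=f(x)$ as $h(x)=S$, where $h(x):=x+f(x)$, and reduce everything to elementary properties of $h$. First, I observe that $h$ is continuous (as a sum of continuous functions) and \emph{strictly} increasing on $[0,S]$: the identity map $x\mapsto x$ is strictly increasing, and adding the nondecreasing function $f$ preserves strict monotonicity. This gives uniqueness essentially for free.

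For existence, I evaluate $h$ at the endpoints. Clearly $h(0)=0+f(0)=0<S$. To get $h(S)>S$ I need $f(S)>0$, and this is where the hypothesis $f\not\equiv 0$ is used: pick any $x_0\in(0,S]$ with $f(x_0)>0$; then monotonicity of $f$ gives $f(S)\ge f(x_0)>0$, so $h(S)=S+f(S)>S$. The intermediate value theorem then produces some $x_f\in(0,S)$ with $h(x_f)=S$, and strict monotonicity of $h$ rules out any second solution.

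For the comparison part, assume that $g$ satisfies the same standing hypotheses as $f$ (so that $x_g$ is defined by the lemma applied to $g$) and that $f(x)\ge g(x)$ on $[0,S]$. Then, using the defining equations for $x_f$ and $x_g$,
\[
x_f+g(x_f)\le x_f+f(x_f)=S=x_g+g(x_g),
\]
and applying the strict monotonicity of the auxiliary map $x\mapsto x+g(x)$ (proved exactly as above) yields $x_f\le x_g$. In the strict variant, $f(x_f)>g(x_f)$ (note $x_f>0$, so we are not at the forced equality at $0$), which makes the first inequality in the display strict and upgrades the conclusion to $x_f<x_g$.

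There is no real obstacle; the argument is a one-variable exercise. The only mildly delicate point worth double-checking is that $f(S)>0$ in the existence step, since dropping either $f\not\equiv 0$ or the monotonicity of $f$ would invalidate it.
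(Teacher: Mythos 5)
Your proposal is correct and is essentially a rigorous writeup of the paper's proof, which consists only of a picture illustrating the intersection of the decreasing line $S-x$ with the nondecreasing curves $f$ and $g$. Passing to the strictly increasing auxiliary function $h(x)=x+f(x)$ and invoking the intermediate value theorem is just the standard formalization of that geometric observation, and your handling of the strict-inequality case via $x_f>0$ fills the one detail the picture glosses over.
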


\begin{proof}
An idea of the proof is clear from the figure below.

\begin{center}
\begin{tikzpicture}[scale=1]
\node[right] (B) at (3,2.5) { $f(x)$};
\node[right] (A) at (3,1) { $g(x)$};
\node[right] (A) at (0.72,2.5) { $S-x$};

\draw[->,name path=xaxis] (-0.2,0) -- (4.2,0) node[right] {$x$};
\draw[->,name path=yaxis] (0,-0.1) -- (0,3.2) node[above] {};

\draw[ thick,name path=plot,domain=0:3] plot (\x,{0.5*sin(deg(\x))+0.1*(\x)^2});

\draw[ thick,name path=plot,domain=0:3] plot (\x,{0.5*sin(deg(\x))+0.15*(sin(3*deg(\x)))^2+0.27*(\x)^2});
\draw[ thick,color=black,dashed,name path=plot,domain=-0.0:1.45] plot (1.6,\x);
\node[] at (1.6,-0.4) { $x_f$};
\draw[ thick,name path=plot,domain=0:3] plot (\x,{3-\x});
\draw[ thick,color=black,dashed,name path=plot,domain=-0.0:0.9] plot (2.1,\x);
\node[] at (2.1,-0.4) { $x_g$};
\node[right] at (3,-0.3) { $S$};
\node[left] at (0,2.9) { $S$};
\end{tikzpicture}
\end{center}
\end{proof}

\begin{lemma}\label{lem:elem2}
Let $v'(t)=F(v(t),t)$ and $\tilde{v}'(t)=\widetilde{F}(\tilde{v}(t),t)$, $t\in [0,T]$, where $F(z,t)$ and $\tilde{F}(z,t)$ are decreasing functions of $z$ for each $t$, $F(z,t)\ge \widetilde{F}(z,t)$ and $v(0)\ge \tilde{v}(0)$. Then $v(t)\ge \tilde{v}(t)$ for all $t\in [0,T]$.
\end{lemma}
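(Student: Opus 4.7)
The plan is to compare $v$ and $\tilde v$ by studying the sign of the difference $w(t):=v(t)-\tilde v(t)$, which satisfies $w(0)\ge 0$. I would argue by contradiction, pinpointing the first instant at which the comparison fails, and then exploiting the anti-monotonicity of $F$ in $z$ to show that on any interval where $w<0$ the function $w$ must in fact be nondecreasing, producing the desired contradiction.

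Concretely, suppose $w(t^*)<0$ for some $t^*\in (0,T]$. Set $t_1:=\sup\{t\in[0,t^*]:w(t)\ge 0\}$; by continuity $t_1<t^*$ and $w(t_1)=0$, while $w(t)<0$ on $(t_1,t^*]$. For every $t\in (t_1,t^*]$ one then has $v(t)<\tilde v(t)$, so the hypothesis that $F(\cdot,t)$ is decreasing gives $F(v(t),t)\ge F(\tilde v(t),t)$, and combined with $F\ge \widetilde F$ this yields
$$v'(t)=F(v(t),t)\ge F(\tilde v(t),t)\ge \widetilde F(\tilde v(t),t)=\tilde v'(t),$$
i.e.\ $w'(t)\ge 0$ on $(t_1,t^*]$. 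Integrating from $t_1$ to $t^*$ produces $w(t^*)\ge w(t_1)=0$, contradicting $w(t^*)<0$.

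The main subtlety I anticipate is that no Lipschitz assumption is imposed on either $F$ or $\widetilde F$, so the usual Gronwall-type comparison argument is unavailable. The point of the proof is precisely that the decreasing property of $F$ plays the role of that missing regularity: wherever the comparison would fail, the monotonicity of $F$ pushes the two trajectories back together rather than allowing them to drift apart. It is worth noting that only the monotonicity of $F$ (and not of $\widetilde F$) is actually used, and that the argument extends verbatim to the case in which $F$ is merely nonincreasing in $z$, provided solutions are unique so that the sup defining $t_1$ is well posed.
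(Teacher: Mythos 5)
Your proof is correct and is the same species of argument as the paper's --- a comparison by contradiction driven by the anti-monotonicity of the vector field in $z$ --- but the mechanism for extracting the contradiction differs slightly. The paper sets $u=\tilde v - v$, supposes $u(\xi)>0$, shows $u'(\xi)<0$ (using that $\widetilde F$ is decreasing), and then locates an interior local maximum $\eta\in(0,\xi)$ where $u'(\eta)=0$; at that point it uses the monotonicity of $F$ together with $F\ge\widetilde F$ to get a strict self-contradiction. You instead take the last zero $t_1$ of $w=v-\tilde v$ before a hypothetical failure time $t^*$, observe that on $(t_1,t^*]$ the inequality $v<\tilde v$ together with $F$ decreasing and $F\ge\widetilde F$ forces $w'\ge 0$, and integrate to contradict $w(t^*)<0$. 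Your route is marginally more economical: it uses the monotonicity of only one of $F,\widetilde F$ (the paper ends up invoking both), and, as you note, it goes through verbatim if $F$ is merely nonincreasing rather than strictly decreasing. One small correction to your closing remark: uniqueness of solutions plays no role in making $t_1$ well posed --- $t_1$ is defined purely from the continuous function $w$ on the compact interval $[0,t^*]$, and the lemma is a comparison between two \emph{given} solutions, so no uniqueness is required under any of the monotonicity hypotheses.
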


\begin{proof}
Let $u(t)=\tilde{v}(t)-v(t)$, then $u(0)=0$. If there exists $\xi>0$ such that $u(\xi)>0$ then
$$
u'(\xi)=\widetilde{F}(\tilde{v}(\xi),\xi)-F(v(\xi),\xi)< \widetilde{F}(v(\xi),\xi)-F(v(\xi),\xi)\le 0.
$$
Since $u(0)\le 0$, $u(\xi)>0$ and $u'(\xi)<0$, $u(t)$ has a local maximum in $(0,\xi)$. Let $0<\eta<\xi$ be a maximum point. Then $u(\eta)>0$ and $u'(\eta)=0$, i.e. $\tilde{v}(\eta)>v(\eta)$ and
$$
\widetilde{F}(\tilde{v}(\eta),\eta)=\tilde{v}'(\eta) =v'(\eta)=F(v(\eta),\eta)> F(\tilde{v}(\eta),\eta)\ge \widetilde{F}(\tilde{v}(\eta),\eta),
$$
a contradiction follows.
\end{proof}

\begin{lemma}
\label{lem:limit}
Let $F(z,t)$ be Lipschitz function in $[0,S]\times [0,\infty)$ such that
\begin{itemize}
\item[(a)] $F(0,t)<0$, $F(S,t)>0$ for all $t>0$;
\item[(b)] there exists $c>0$ such that 
$$
F(z_1,t)-F(z_2,t)\ge c(z_2-z_1) \quad\text{ for $t\ge0$ and  $0\le z_1<z_2\le S$};
$$
\item[(c)] if $0<z(t)<S$ is the unique solution of $F(z(t),t)=0$ then $\lim\limits_{t\to\infty}z(t)=\bar z$.
\end{itemize}
Then for any solution of 
$$
u'(t)=F(u(t),t), \quad 0<u(0)<S
$$ there holds $\lim\limits_{t\to\infty}u(t)=\bar z$.
\end{lemma}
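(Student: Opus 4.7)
The plan is to exploit the contractivity coming from (b), treating $z(t)$ as a slowly varying ``target'' that drags the solution toward $\bar z$. First I would observe that by the sign conditions in (a) together with the monotonicity (b), the interval $[0,S]$ is forward invariant, so $0<u(t)<S$ for all $t\ge 0$; this is a scalar analogue of the invariance argument used in the proof of Proposition~\ref{pro:1}.

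The heart of the argument is the decomposition
\begin{equation*}
u'(t)=F(u(t),t)=\bigl[F(u(t),t)-F(\bar z,t)\bigr]+F(\bar z,t),
\end{equation*}
combined with the identity $F(z(t),t)\equiv 0$ from (c). By (b), $\mathrm{sgn}(u-\bar z)\bigl(F(u,t)-F(\bar z,t)\bigr)\le -c|u-\bar z|$, while the Lipschitz assumption gives $|F(\bar z,t)|=|F(\bar z,t)-F(z(t),t)|\le L|\bar z-z(t)|=:\eta(t)$, where $L$ is the Lipschitz constant; by (c), $\eta(t)\to 0$. Setting $W(t)=|u(t)-\bar z|$ and working with the upper Dini derivative (to cope with the corner at $W=0$), I would obtain
\begin{equation*}
D^+ W(t)\le -cW(t)+\eta(t).
\end{equation*}

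A standard Gronwall comparison then yields $W(t)\le W(0)e^{-ct}+\int_0^t e^{-c(t-s)}\eta(s)\,ds$; splitting the integral into $\int_0^T$ and $\int_T^t$ with $T$ chosen so that $\eta(s)<\varepsilon$ on $[T,\infty)$ shows that $\limsup_{t\to\infty}W(t)\le\varepsilon/c$, and letting $\varepsilon\to 0$ gives $u(t)\to\bar z$. The only delicate point is the non-smoothness of $W$ at $u=\bar z$, which can be bypassed either by Dini derivatives or by replacing $W$ with $V(t)=(u(t)-\bar z)^2$ and applying Cauchy--Schwarz to reach $V'\le -cV+\eta(t)^2/c$, with the same conclusion. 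I do not foresee any serious obstacle: the whole argument is a routine perturbation-of-equilibrium estimate, and the only genuinely nontrivial ingredient is the observation that $F(\bar z,t)\to 0$, which uses both Lipschitzness of $F$ and the convergence supplied by (c).
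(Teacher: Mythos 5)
Your proof is correct, and it follows a genuinely different route from the paper's. You set up a contraction estimate: by (b) you get $\mathrm{sgn}(u-\bar z)\bigl(F(u,t)-F(\bar z,t)\bigr)\le -c|u-\bar z|$, while the Lipschitz bound together with (c) controls the forcing term $|F(\bar z,t)|=|F(\bar z,t)-F(z(t),t)|\le L|z(t)-\bar z|=:\eta(t)\to 0$; this yields the linear differential inequality $D^+W\le -cW+\eta(t)$ for $W=|u-\bar z|$ and Gronwall finishes. The paper avoids Gronwall entirely and argues on the sign of $u(t)-z(t)$: either (i) this sign is eventually constant, in which case $u'(t)=F(u,t)-F(z,t)$ forces $u$ to be monotone with limit $\bar u$, and the estimate $\int_t^\infty|z-u|\,ds\le c^{-1}|\bar u-u(t)|\to 0$ forces $\bar u=\bar z$; or (ii) $u(t_k)=z(t_k)$ along some $t_k\nearrow\infty$, in which case one contradicts $\limsup u>\bar z$ by selecting local maxima $\xi_k$ where $u'(\xi_k)=0$ and hence $u(\xi_k)=z(\xi_k)\to\bar z$. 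Your Lyapunov/Gronwall version is more quantitative (it produces an explicit exponential decay rate) and is perhaps the more standard tool; the paper's version is slightly more elementary in that it only uses the one-sided constant $c$ from (b) and never invokes the global Lipschitz constant $L$. Both are sound. One minor remark: as printed, condition (a) reads $F(0,t)<0$, $F(S,t)>0$, which contradicts the strict decrease in $z$ implied by (b); this is clearly a typo for $F(0,t)>0$, $F(S,t)<0$, and both you and the paper tacitly use the corrected form when deducing forward invariance of $(0,S)$.
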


\begin{proof}
By (b) $F(z,t)$ is strictly decreasing in $z$ for each $t\ge0$. It follows from the conditions (a)--(b) and the classical Clarke result \cite{Clarke} that $z(t)$ in (c) is well-defined and local Lipschitz on $[0,\infty)$. It follows from (a) that $0<u(t)<S$ for all $t\ge0$. Now, two alternatives are possible: (i) either there exists $T>0$ such that $u(t)\ne z(t)$ for $t\ge T$, or (ii) there exists $t_k\nearrow \infty$: $u(t_k)=z(t_k)$. First let (i) hold and assume without loss of generality that $u(t)<z(t)$ for $t\ge T$. Then
\begin{equation}\label{difu}
u'(t)=F(u(t),t)-F(z(t),t)\ge c(z(t)-u(t))\ge0,
\end{equation}
hence $u(t)$ is nondecreasing, therefore there exists
\begin{equation}\label{baru}
\bar u:=\lim_{t\to \infty} u(t)\le \lim_{t\to \infty} z(t)=\bar z.
\end{equation}
Combining \eqref{baru} with the monotonicity of $u(t)$ and \eqref{difu} imples
$$
\int_{t}^\infty |z(s)-u(s)|\,ds\le \frac1c(\bar u-u(t))\to 0\,\, \text{ as }t\to\infty
$$
which  implies the equality in \eqref{baru}. Next, if (ii) holds then $\lim_{k\to \infty}u(t_k)=\bar z$. Assume by contradiction that, for example, $\bar u:=\lim\sup_{t\to\infty} u(t)>\bar z$ and let $\xi_k\nearrow\infty$ be a corresponding sequence where the $\lim\sup$ is attained. Since
$$
\lim_{k\to \infty}u(t_k)=\bar z<\bar u=\lim_{k\to \infty}u(\xi_k)
$$
one can redefine the sequence $\xi_k$ such that each $\xi_k$ becomes a local maximum of $u$. This yields $0=u'(\xi_k)=F(u(\xi_k),\xi_k)$, thus $u(\xi_k)=z(\xi_k)$. Passing to limit as $k\to\infty$ yields a contradiction.
\end{proof}

\section{Period-two-points of non-increasing maps}\label{sec:period}
Let $0\in D\subset \R^{n}_+$ and $G:D\to D$ be an arbitrary map. Recall that a pair $(a,b)$, $a,b\in D$, is called a \textit{period-two-point} \cite[p.~387]{Hale}, or $(a,b)\in \Fix_2(G)$, if
\begin{equation}\label{loop}
G(a)=b, \quad G(b)=a.
\end{equation}
Any fixed point $c\in \Fix(G)$ gives rise to a trivial period-two-point $(c,c)$.

Hereinafter, we assume that $G$ is continuous and non-increasing in $D$, i.e. $G(x)\ge G(y)$ for any $x\le y$ in $D$. Note that $G$ is then automatically bounded:
\begin{equation}\label{bounn}
0\le G(x)\le G(0),\qquad \forall x\in D.
\end{equation}

Since $0\in D$, the iterations $u^0=0$,  $u^k:=G^k(0)\in D$, $k\ge1$, are well-defined, $u^1\ge u^0=0$ (an a priori estimate) and $u^2=G(u^1)\le G(u^0)=u^1$ (by virtue of the monotonicity of $G$). Hence, it follows by induction that
\begin{equation}\label{Sequ}
u^0\leq u^2\leq\ldots u^{2k}\le\ldots u^{2k+1}\le\ldots\leq u^3\leq u^1.
\end{equation}
This implies that the limits
\begin{equation}\label{inneq}
\check{0}_G:=\lim_{k\to \infty}u^{2k}\le \hat{0}_G:=\lim_{k\to \infty}u^{2k-1}
\end{equation}
exist and $(\check{0}_G,\hat{0}_G)$ is a period-two-point of $G$:
\begin{equation}\label{hin}
G(\check{0}_G)=\hat{0}_G, \quad \check{0}_G=G(\hat{0}_G)
\end{equation}

Thus obtained period-two-point is extremal as the following property shows.

\begin{proposition}
\label{pro:loop}
For any $(a,b)\in \Fix_2(G)$  there holds
\begin{equation}\label{perr}
\check{0}_G\le a,\quad b\le \hat{0}_G.
\end{equation}
In particular,
\begin{equation}\label{local}
\check{0}_G\le c\le \hat{0}_G, \qquad \forall c\in \Fix(G),
\end{equation}
and the box
\begin{equation}\label{Hbox}
[\check{0}_G,\hat{0}_G]:=\{u:\check{0}_G\le u\le \hat{0}_G\}
\end{equation}
is invariant under the mapping $G$.
\end{proposition}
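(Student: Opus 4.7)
The plan is to exploit the monotonicity of $G$ together with an induction that compares the sequence $u^k = G^k(0)$ to the period-two-point $(a,b)$ pointwise.

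First I would set up the induction. Since $a \in D \subset \R_+^n$, the starting inequality $u^0 = 0 \le a$ is free. Applying $G$ and using that $G$ is non-increasing together with $G(a)=b$, I get $u^1 = G(0) \ge G(a) = b$; one more application together with $G(b) = a$ gives $u^2 = G(u^1) \le G(b) = a$. Assuming inductively that $u^{2k} \le a$ and $u^{2k+1} \ge b$, applying $G$ to the second inequality yields $u^{2k+2} = G(u^{2k+1}) \le G(b) = a$, and applying $G$ to the resulting $u^{2k+2} \le a$ yields $u^{2k+3} = G(u^{2k+2}) \ge G(a) = b$. Passing to the limit along even and odd indices, and using \eqref{inneq}, produces
\begin{equation*}
\check{0}_G = \lim_{k\to\infty} u^{2k} \le a, \qquad b \le \lim_{k\to\infty} u^{2k+1} = \hat{0}_G,
\end{equation*}
which is exactly \eqref{perr}.

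For \eqref{local} I would simply observe that any $c \in \Fix(G)$ gives the trivial period-two-point $(c,c) \in \Fix_2(G)$, so applying \eqref{perr} with $a = b = c$ yields $\check{0}_G \le c \le \hat{0}_G$.

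Finally, for the invariance of the box \eqref{Hbox}, I would take $u \in [\check{0}_G, \hat{0}_G]$ and apply the non-increasing property of $G$ twice, combined with the identities \eqref{hin}: from $\check{0}_G \le u \le \hat{0}_G$ one gets
\begin{equation*}
\check{0}_G = G(\hat{0}_G) \le G(u) \le G(\check{0}_G) = \hat{0}_G,
\end{equation*}
so $G(u) \in [\check{0}_G,\hat{0}_G]$. No step here looks technically hard; the only thing to watch is keeping even/odd indices aligned with the correct inequality direction in the induction, since reversing one sign breaks the whole argument.
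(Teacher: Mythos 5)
Your proof is correct and takes essentially the same route as the paper: comparing the iterates $u^k = G^k(0)$ with $(a,b)$ by induction using monotonicity, specializing to $(c,c)$ for \eqref{local}, and applying monotonicity with \eqref{hin} for the box invariance. The paper merely compresses your induction into the one-line observation $u^{2k}\le G^{2k}(a)=a$, $u^{2k-1}\ge G^{2k-1}(a)=b$.
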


\begin{proof}
Since $a\ge u^0=0$ and $G$ is a non-increasing one has
$$
u^{2k}\le G^{2k}(a)=a, \quad
u^{2k-1}\ge G^{2k-1}(a)=b, \quad \text{ for all }\,k=1,2,\ldots
$$
This readily yields \eqref{perr}. Then \eqref{local} follows from the fact that $(c,c)$ is a period-two-point for any  $c\in \Fix(G)$. The last claim of the proposition follows immediately from the monotonicity of $G$ and \eqref{hin}.
 \end{proof}

\begin{proposition}
\label{pro:est}
Let $x,y\in D$ be such that
\begin{equation}\label{xyeq}
G(y)\le x,\quad y\le G(x).
\end{equation}
Then there exists $(a,b)\in \Fix_2(G)$ such that
\begin{equation}\label{exist}
\begin{split}
a&:=\lim_{k\to \infty}y^{2k-1}=\lim_{k\to \infty}x^{2k}\ge \check{0}_G,\\ b&:=\lim_{k\to \infty}y^{2k}=\lim_{k\to \infty}x^{2k-1}\le \hat{0}_G.
\end{split}
\end{equation}
\end{proposition}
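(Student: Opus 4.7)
The plan is to run the iteration $G$ in parallel starting from both $x$ and $y$ and show that the four resulting orbits line up in a common interleaved pattern whose limits form a period-two-point. Write $x^k:=G^k(x)$ and $y^k:=G^k(y)$ for $k\ge 0$, so that the hypothesis \eqref{xyeq} reads $y^1\le x^0$ and $y^0\le x^1$. Applying $G$ once to each of these and using that $G$ reverses order gives $x^2\le y^1\le x^0$ and $x^1\le y^2\le x^3$, which is the pattern one level up.

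First I would establish by induction on $k\ge 1$ the interleaving inequalities
\begin{equation*}
x^{2k}\le y^{2k-1}\le x^{2k-2},\qquad x^{2k-1}\le y^{2k}\le x^{2k+1}.
\end{equation*}
The base case $k=1$ is what was observed above; in the induction step one simply applies $G$ to the four inequalities at level $k$ and collects the resulting bounds. From this interleaving the sequences $\{x^{2k}\}$ and $\{y^{2k-1}\}$ are non-increasing while $\{x^{2k-1}\}$ and $\{y^{2k}\}$ are non-decreasing; all four lie in the range of $G$ and hence are bounded by \eqref{bounn}, so all four limits exist. The squeeze $x^{2k}\le y^{2k-1}\le x^{2k-2}$ then forces $a:=\lim x^{2k}=\lim y^{2k-1}$, and $x^{2k-1}\le y^{2k}\le x^{2k+1}$ forces $b:=\lim x^{2k-1}=\lim y^{2k}$. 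Continuity of $G$ applied to $x^{2k+1}=G(x^{2k})$ and $x^{2k}=G(x^{2k-1})$ yields $G(a)=b$ and $G(b)=a$, so $(a,b)\in \Fix_2(G)$.

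For the bounds $a\ge \check{0}_G$ and $b\le \hat{0}_G$, I would compare with the canonical orbit $u^k:=G^k(0)$ introduced before \eqref{Sequ}. Because $0\le x$ and $G$ is non-increasing, induction gives $u^{2k}\le x^{2k}$ and $u^{2k-1}\ge x^{2k-1}$ for every $k\ge 1$, and passing to the limit together with \eqref{inneq} produces $\check{0}_G\le a$ and $b\le \hat{0}_G$. The only genuine obstacle is keeping the parity bookkeeping straight through the inductive step that sets up the interleaving; once that is in place, monotone convergence, a squeeze, and continuity do the rest.
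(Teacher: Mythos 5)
Your proposal is correct and follows essentially the same route as the paper: set up the interleaved monotone chains, pass to the limit, and use continuity to land in $\Fix_2(G)$. The only cosmetic difference is that for the bounds $a\ge\check{0}_G$ and $b\le\hat{0}_G$ you redo the comparison with the canonical orbit $u^k=G^k(0)$ by hand, whereas the paper simply observes that $(a,b)\in\Fix_2(G)$ and invokes the extremal property \eqref{perr} of Proposition~\ref{pro:loop}; the underlying calculation is identical. One small exposition slip worth fixing: you write that ``applying $G$ once'' yields $x^1\le y^2\le x^3$, but the upper bound $y^2\le x^3$ requires a second application of $G$ (to $x^2\le y^1$), so the base case $k=1$ really needs two rounds before both halves of the interleaving are in place.
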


\begin{proof}
Let $y^0=y$ and $y^k=G^k(y)$, $k\ge 1$, hence \eqref{xyeq} becomes
$$
y^1\le x^0,\quad y^0\le x^1.
$$
Applying $G$ we yields $y^2\ge x^1\ge y^0$ and $x^0\ge y^1\ge x^2$.
Proceeding by induction on $k$, we obtain by virtue of \eqref{bounn}
\begin{align*}
0&\le \ldots \le x^{4}\le y^3\le x^2\le y^1 \le x^0,\\
y^0&\le x^1\le y^2\le x^3\le y^4\le \ldots \le G(0).
\end{align*}
This implies the existence of limits in \eqref{exist}. It also follows that $G(a)=b$ and $G(b)=a$, hence $(a,b)\in \Fix_2(G)$ and $a\le x$, $ y\le b$. Combining with the extremal property \eqref{perr} yields  \eqref{exist}.
\end{proof}

\section{Stratification of equilibrium points}\label{sec:equil}

Let us denote by
$\Equ$ the set of nonnegative equilibrium points (stationary solutions) of \eqref{HX1}-\eqref{HV1}. It is natural to  consider the standard  stratification
$$
\Equ=\bigcup_{J}\Equ_J,
$$
where
$$
\Equ_J=\{(x,v)\in \Equ: \,\, x_j\ne 0 \,\,\Leftrightarrow\,\, j\in J \},
$$
and $J$ runs over all subset of $\{1,2,\ldots,M\}$.
The supply point $S$ is the equilibrium resource availabilities  in the absence of any species and obviously $(0,S)$ is the only point in $\Equ_{\emptyset }$:
$$
\Equ_{\emptyset }=\{(0,S)\}.
$$

\begin{proposition}\label{pro:triv}
For an arbitrary $(0,S)\ne (x,v)\in \Equ$ there holds
\begin{equation}\label{positivity}
x>0 \quad \text{and}\quad 0\ll v\ll S.
\end{equation}
\end{proposition}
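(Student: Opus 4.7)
The plan is to proceed by a short chain of contradictions, using the equilibrium equations together with the boundary condition \eqref{MM2b} which says $\phi_j(v)=0$ iff $v\in\partial\R^m_+$.

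First I would handle $x\neq 0$. Suppose $x=0$. Then the equilibrium version of \eqref{HV1} reduces to $D_i(S_i-v_i)=0$ for all $i$, so $v=S$ and we land on the excluded point $(0,S)$. Hence $x\neq 0$, i.e.\ $x>0$ in the paper's ordering.

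Next I would show $v\gg 0$. Suppose some component $v_i=0$, so $v\in\partial\R^m_+$. By \eqref{MM2b}, $\phi_j(v)=0$ for every $j$, and the $i$-th equilibrium resource equation then gives
\begin{equation*}
D_iS_i \;=\; D_i(S_i-v_i) \;=\; \sum_{j=1}^M c_{ij}\,x_j\,\phi_j(v) \;=\; 0,
\end{equation*}
contradicting $D_i,S_i>0$. Hence $v_i>0$ for every $i$, i.e.\ $v\gg 0$.

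Finally I would rule out $v_i=S_i$. If $v_k=S_k$ for some $k$, the $k$-th equilibrium resource equation reads $\sum_{j}c_{kj}x_j\phi_j(v)=0$. Since $v\gg 0$ by the previous step, $v\notin\partial\R^m_+$, so by \eqref{MM2b} we have $\phi_j(v)>0$ for each $j$; combined with $c_{kj}>0$ and $x_j\ge 0$, each summand is nonnegative, so every term must vanish, forcing $x_j=0$ for all $j$. This contradicts the first step. Therefore $v\ll S$.

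There is no real obstacle: the whole statement is a direct bookkeeping exercise with the two equilibrium equations, the positivity of the parameters $D_i,S_i,c_{ij}$, and the dichotomy built into \eqref{MM2b}. The only mild subtlety is that in the last step one needs $v\gg 0$ already established so as to conclude $\phi_j(v)>0$ before concluding $x=0$; doing the three steps in the order above avoids any circularity.
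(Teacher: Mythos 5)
Your proof is correct and follows essentially the same three-step chain of contradictions as the paper, in the same order. The only minor variation is in the last step: the paper uses the existence of some $x_k\neq 0$ together with \eqref{MM2b} to deduce $v\in\partial\R^m_+$ and hence $v=S$, reaching a contradiction, while you instead use the already-established $v\gg 0$ to get $\phi_j(v)>0$ for all $j$ and conclude $x=0$, contradicting the first step — both are equally valid applications of the same dichotomy in \eqref{MM2b}.
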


\begin{proof}
If $x=0$ then $v=S$, thus $x>0$. If some $v_i=0$ then \eqref{MM2b} yields $\phi_j(v)=0$ for all $j$, hence by \eqref{HV1}  $v_i=S_i$, a contradiction, i.e. $v\gg 0$. Finally, note that $v\le S$. If $v_i=S_i$ for some $i$ then $\sum_{j=1}^M c_{ ij} \; x_j \; \phi_j(v)=0$. By the above, there exists $x_{k}\ne 0$, therefore $\phi_k(v)=0$ implying by \eqref{MM2b} that $v\in \partial \R^{m}_+$, thus  $\phi_j(v)=0$ for all $j$. Applying the stationary condition to \eqref{HV1} we see that $v=S$, a contradiction with $v\in \partial \R^{m}_+$. Therefore $v\ll S$.
\end{proof}

Let $(x,v)\in \Equ_J$. Then $x_j=0$ if $j\not\in J$ and
$$
x_j=\mathbf{X}_j(v):=\half1{\gamma_j} (\phi_j(v) -\mu_j)_+>0\, \quad \text{for all }\,i\in  J,
$$
where $w_+=\max (0,w)$, therefore $v$ is determined uniquely by
\begin{equation}\label{split}
\begin{split}
v_i&=S_i-\sum_{j\in J} \frac{c_{ij}}{D_i}\,   \mathbf{X}_j(v) \phi_j(v)=:(\mathbf{F}_J(v))_i.
\end{split}
\end{equation}
Extend $\textbf{F}_J$ by  $\mathbf{F}_{\emptyset}(v):= S$.
In the present setting, if $(x,v)\in \Equ_J$ then $v$ solves the fixed point problem
\begin{equation}\label{fixedJ}
v=\mathbf{F}_J(v),
\end{equation}
and
\begin{equation}\label{fixedX}
x_j=
\left\{
\begin{array}{ll}
0& \text{if $j\not\in J$}\\
\mathbf{X}_j(v)& \text{if $j\in J$}\\
\end{array}
\right.
\end{equation}
The converse is not necessarily true: if $v$ is a solution of \eqref{fixedJ} and $x$ is defined by \eqref{fixedX} then $(x,v)$ is an equilibrium point in $\Equ_{J'}$ for some $J'\subset J$. Indeed, it might happen that $\phi_j(v)\le \mu_j$, i.e. $x_j=0$ for some $j\in J$. On the other hand, if $J\ne\emptyset$ then necessarily  $J'\ne\emptyset$ because if  $x_j=0$ for all $j$ then  $(x,v)=(0,S)$, but $\mathbf{F}_J(S)\ll S$ in view of \eqref{MM2b}, a contradiction with \eqref{fixedJ}.

To distinguish this situation, we denote by
$$
\widetilde {\Equ}_{J}= \text{ the set of solutions $(x,v)$ of \eqref{fixedJ} and \eqref{fixedX}}.
$$
Then $\widetilde {\Equ}_{\emptyset}=\Equ_{\emptyset}$, and the above argument yields that for any $J\ne \emptyset$
\begin{equation}\label{emptyE}
\Equ_{J}\subset \widetilde {\Equ}_{J}\subset \bigcup_{\emptyset\ne J'\subset J}
\Equ_{J'}
\end{equation}

Thus refined stratification $J\to \widetilde {\Equ}_{J}$ still contains information about all equilibrium points but it has better properties than $J\to {\Equ}_{J}$.

\begin{proposition}\label{pro:empty}
For any $J\ne \emptyset$, the set $\widetilde {\Equ}_{J}$ is nonempty.
\end{proposition}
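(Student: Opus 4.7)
The plan is to produce an element of $\widetilde{\Equ}_J$ by solving the fixed point equation \eqref{fixedJ} via Brouwer's theorem applied to a suitable truncation of $\mathbf{F}_J$, and then arguing that the truncation is inactive at the obtained fixed point by exploiting the boundary condition \eqref{MM2b}. The Lipschitz continuity of each $\phi_j$ and the continuity of $t\mapsto t_+$ make $\mathbf{F}_J$ continuous on $[0,S]$, and since $\mathbf{X}_j(v)\ge 0$ and $\phi_j(v)\ge 0$, the sum in \eqref{split} is nonnegative, so $(\mathbf{F}_J(v))_i\le S_i$. However this sum may exceed $S_i$ (for small self-limitations $\gamma_j$), so $\mathbf{F}_J$ need not map $[0,S]$ into itself.

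To remedy this I would introduce the componentwise truncation
$$
G(v)_i:=\max\{(\mathbf{F}_J(v))_i,\,0\},\qquad i=1,\ldots,m,
$$
which is a continuous self-map of the compact convex box $[0,S]$. Brouwer's fixed point theorem then furnishes $v^*\in[0,S]$ with $G(v^*)=v^*$.

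Next I would show that $v^*\gg 0$, so that the truncation is in fact inactive at $v^*$. Suppose for contradiction that $v^*_i=0$ for some index $i$; then $v^*\in\partial\R^m_+$, and \eqref{MM2b} forces $\phi_j(v^*)=0$ for every $j\in J$. Plugging this into \eqref{split} annihilates the whole sum, giving $(\mathbf{F}_J(v^*))_i=S_i>0$, whence $v^*_i=G(v^*)_i=S_i>0$, contradicting $v^*_i=0$. Since $v^*\gg 0$, the identity $v^*_i=\max\{(\mathbf{F}_J(v^*))_i,0\}$ combined with $v^*_i>0$ forces $(\mathbf{F}_J(v^*))_i=v^*_i$ componentwise, so $v^*=\mathbf{F}_J(v^*)$. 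Defining $x$ by \eqref{fixedX} with $v=v^*$ then produces the required pair $(x,v^*)\in\widetilde{\Equ}_J$.

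The main obstacle is precisely the failure of the self-mapping property of $\mathbf{F}_J$ on $[0,S]$: a direct Brouwer argument is blocked because components of $\mathbf{F}_J(v)$ may be negative. The truncation $G$ circumvents this at the cost of introducing potentially spurious fixed points on the boundary of $[0,S]$, and it is exactly the boundary condition \eqref{MM2b}, together with $S\gg 0$, that rules such spurious points out and lets the argument close.
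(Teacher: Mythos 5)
Your proof is correct and follows essentially the same route as the paper: truncate $\mathbf{F}_J$ to the continuous self-map $G(v)=\max(\mathbf{F}_J(v),0)$ of $[0,S]$, apply Brouwer, and use \eqref{MM2b} to rule out boundary fixed points, which then shows the truncation is inactive. The only difference is that you spell out the motivating obstruction (why $\mathbf{F}_J$ itself need not be a self-map of $[0,S]$) more explicitly than the paper does.
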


\begin{proof}
Consider a modified  fixed point problem
$$
v=(\mathbf{F}_J(v))_+:=\max(\mathbf{F}_J(v),0).
$$
Then $v\to (\mathbf{F}_J(v))_+$ maps continuously  the box $[0,S]$ into itself, hence by Brouwer's theorem there exists a fixed point $v\in [0,S]$. If $v_k=0$ for some $k$ then by \eqref{MM2b} we have $\phi_j(v)=0$ for all $j$, thus $v_k=(\mathbf{F}_J(v))_k=S_k$, a contradiction. Thus $v\gg0$ and $v_k=[\mathbf{F}_k(v)]_+>0$ for all $k$, therefore in fact $v_k=\mathbf{F}_k(v)$ holds for all $k$. This proves that $v$ is a  solution of the original fixed point problem \eqref{fixedJ} and $v\gg0$. If $x$ is defined by \eqref{fixedX} then it follows that $(x,v)\in \widetilde{\Equ}_J$.
\end{proof}

\section{An auxiliary finite-dimensional fixed point problem}\label{sec:fixed}
Among all equilibrium points in $\Equ$, we shall distinguish the \textit{special} ones, namely those containing in
$$
\widetilde {\Equ}_{M}:=\widetilde {\Equ}_{\{1,2,\ldots, M\}}.
$$
Equivalently, a point $(x,v)$ is said to be a special (equilibrium) point  if and only if $v$ is a solution of the fixed point problem
\begin{equation}\label{fixedp}
v=\mathbf{F}(v),\quad \mathbf{F}:=\mathbf{F}_{\{1,2,\ldots, M\}},
\end{equation}
and $x$ is given by
\begin{equation}\label{xdef}
x_j=\mathbf{X}_j(v):=\half1{\gamma_j} (\phi_j(v) -\mu_j)_{+}.
\end{equation}

By Proposition~\ref{pro:empty}, the set of special equilibrium points is nonempty.  Note also that if $(x,v)$ is an arbitrary equilibrium point of \eqref{HX1}--\eqref{HV1} with $x\gg 0$ then it is necessarily a special one because by \eqref{HX1}  $\phi_j(v)>\mu_j$ for all $j$, hence $x$ is determined by \eqref{xdef} and therefore $v$ satisfies \eqref{fixedp}.


The set of special equilibrium points $\widetilde {\Equ}_{M}=\Fix(\mathbf{F})$ reflects the complexity of large-time dynamics of the original system in the following sense. Theorem~\ref{th1} below shows that if there exists a unique global stable equilibrium point of \eqref{HX1}, \eqref{HV1}, \eqref{Idata}  then it is necessarily a special point (in this case, obviously, unique). Therefore the structure and the number of special equilibrium points plays a crucial role in the large-time dynamics of \eqref{HX1}, \eqref{HV1}, \eqref{Idata} .

Thus, it is naturally to expect that the global stability will be lost if the cardinality $|\Fix(\mathbf{F})|\ge 2$. Note that if $m=1$ then  Lemma~\ref{lem:elem} easily implies that $\Omega$ consists of exactly one point: $|\Fix(\mathbf{F})|=1$. However, if $m\ge2$, the situation is more subtle as the example below shows.

\begin{example}\label{ex1}
First let us consider \eqref{Liebm} with $M=m=2$,
$$
(c_{ij})=\left(
         \begin{array}{cc}
           1 & 0 \\
           0 & 1 \\
         \end{array}
       \right),\quad (K_{ij})=\left(
         \begin{array}{cc}
           1 & \beta \\
           \beta & 1 \\
         \end{array}
       \right),\quad
       \mu_j=0, \,\,S_i=S,\,\, \frac{r_i^2}{D_i\gamma_j}=:A>S
$$
for all $i=1,2$, where $\beta>1$ to be specified later. Then $\mathbf{F}=(f(v_1,v_2),f(v_2,v_1))$, with
$$
f(x,y)=S-A\min(\frac{x^2}{(1+x)^2}, \frac{y^2}{(\beta+y)^2}).
$$
Lemma~\ref{lem:elem} easily yields  the existence of exactly one solution of \eqref{fixedp} on the diagonal $v_1=v_2$, $0<v_1<S$. We claim that there exists yet another solution in the triangle $\Delta=\{0<\beta v_1\le v_2<S\}$. Indeed, $$\mathbf{F}|_{\Delta}=(S-\frac{Av_1^2}{(1+v_1)^2}, S-\frac{Av_1^2}{(\beta+v_1)^2}),
$$
and by Lemma~\ref{lem:elem} there exists a unique  $0<\bar v_1<S$ such that  $S-\bar v_1=\frac{A\bar v_1^2}{(1+\bar v_1)^2}$. Define $\bar v_2=S-\frac{A\bar v_1^2}{(\beta+\bar v_1)^2}$. Then $(\bar v_1,\bar v_2)$ will be a desired  fixed point if we ensure that it belongs to $\Delta$. We have
\begin{equation}\label{imm}
\frac{ \bar v_2-\beta \bar v_1}{\beta-1}=A\biggl(g(\beta,\bar v_1)-\frac{S}{A}\biggr), \quad \text{where }
g(\beta,t)=\frac{t^2\bigl((t+1+\beta)^2-\beta\bigr)}{(t+1)^2(t+\beta)^2}
\end{equation}
Notice that for any $\beta>1$, $g(\beta,t)$ is an increasing function of $t>0$, $g(\beta,0)=0$ and
$$
\lim_{t\to \infty}g(\beta,t)=1>\frac{S}{A},
$$
therefore there exists a unique  $t_\beta>0$ such that $g(\beta,t_\beta)=\frac{S}{A}$. Next notice that $\frac{\partial g}{\partial \beta}<0$, hence $t_\beta$ is a decreasing continuous function of $\beta$. Since $\lim_{\beta\to\infty}g(\beta,t)\equiv 1$ uniformly on any ray $(\epsilon,\infty)$, $\epsilon>0$, we also have $$\lim_{t\to+0}t_\beta=0. 
$$Therefore there exists $\bar{\beta}$ such that $\bar v_1>t_{\bar{\beta}}$, thus $g(\bar{\beta},\bar v_1)>g(\bar{\beta},t_{\bar{\beta}})=0$ and \eqref{imm} yields $\bar v_2-\bar{\beta}\bar v_1>0$, implying our claim. Next, since $\bar v_2>\bar\beta \bar v_1>\bar v_1$, the found solution is out the diagonal. By symmetry reasons, $(\bar v_2,\bar v_1)$ is also a solution of \eqref{fixedp}. Finally, since all the three solutions are distinct, the standard continuity argument implies that \eqref{fixedp} still has three distinct solutions for $(c_{ij})=\left(\begin{array}{cc}
           1 & \epsilon_1 \\
           \epsilon_1 & 1 \\
         \end{array}
       \right)$
       and $\mu_j=\epsilon_2$ when $\epsilon_i>0$ small enough.
\end{example}

A  careful analysis shows that for $m=2$ there always holds  $|\Fix(\mathbf{F})|\le 3$ Liebig-Monod model \eqref{Liebm}. Furthermore, for any $m\ge2 $, an argument similar to Example~\ref{ex1} yields $|\Fix(\mathbf{F})|\ge m+1$ for certain sets of parameters.

%
%
%
%

Now, let us turn to the fixed point problem \eqref{fixedp}. It is naturally to study solutions of \eqref{fixedp} by virtue of iterations $\mathbf{F}^k(0)$. But \eqref{fixedp} is  non-regular in the sense that already the second iteration $\mathbf{F}^2(0)$ can be outside of $[0,S]$. Indeed, $\mathbf{F}^2_i(0)=\mathbf{F}_i(S)$ becomes negative if $\gamma_j$ or $D_i$ are small enough (alternatively, $c_{ij}$ large enough).

To refine iterations, we suitably polarize \eqref{fixedp} to get a system with the same set of fixed points. Namely, given $w\in [0,S]$ let us define $\mathbf{V}(w) \in [0,S]$ as the unique solution $v$ of the system
\begin{equation}
S_i-v_i=\sum_{j=1}^{M} \frac{c_{ij}}{D_i}   \mathbf{X}_j(w) \phi_j(w_1,\ldots,w_{i-1},v_i,\ldots,w_m)
,\quad i=1,\ldots,m.
\label{Starvd}
\end{equation}
Note that each equation of system \eqref{Starvd} contains a single unknown variable $v_i$, thus Lemma~\ref{lem:elem}  implies that for all $i$ a unique solution $v_i$ of \eqref{Starvd} exists and $0<v_i\le  S$. Therefore $0\ll \mathbf{V}(w)\le S$. Also, by the survivability condition \eqref{survav} $\mathbf{X}_j(S)=\half1{\gamma_j} (\phi_j(S) -\mu_j)>0$, hence
\begin{equation}\label{VS}
0\ll \mathbf{V}(S)\ll S.
 \end{equation}
Furthermore, the second part of Lemma~\ref{lem:elem} implies that $\mathbf{V}(w)$ is \textit{non-increasing}:
 $$
 w_1\le w_2\,\,\Rightarrow\,\,\mathbf{V}(w_1)\ge \mathbf{V}(w_2).
 $$
Now, if $v$ solves \eqref{fixedp} then by the uniqueness of solution of \eqref{Starvd} one has
\begin{equation}\label{K2}
v=\mathbf{V}(v).
\end{equation}
Conversely, if $v$ is a solution of \eqref{K2} then it also solves \eqref{fixedp}. Thus, in the present setting, the fixed point problem \eqref{fixedp} is completely equivalent to  \eqref{K2}:
$$
\Fix(\mathbf{F})=\Fix(\mathbf{V}).
$$
The main advantage of $\mathbf{V}$ with respect to $\mathbf{F}$ is that by its definition,
$$
\mathbf{V}:[0,S]\to [0,S].
$$

Now, with $\mathbf{V}$ in hands we apply the technique of Section~\ref{sec:period}. Namely, using the definition \eqref{inneq}, we see that starting with $u^0=0$, the even and odd iterations converge respectively to
\begin{equation}\label{inneq1}
\lim_{k\to \infty}\mathbf{V}^{2k}(0)=:\check{0}_{\mathbf{V}}\le \hat{0}_{\mathbf{V}}:=\lim_{k\to \infty}\mathbf{V}^{2k-1}(0).
\end{equation}
In particular,
$$
(\check{0}_{\mathbf{V}},\, \hat{0}_{\mathbf{V}})\in \Fix_2(\mathbf{V}),
$$
and, furthermore, $(\check{0}_{\mathbf{V}},\, \hat{0}_{\mathbf{V}})$ possesses the extremal property in Proposition~\ref{pro:loop}. In particular, it follows from \eqref{local} that
\begin{equation}\label{Hbox1}
\check{0}_{\mathbf{V}}\le v\le \hat{0}_{\mathbf{V}},\quad \forall v\in \Fix(\mathbf{V}).
\end{equation}
This imemdiately yields

\begin{corollary}
If the equality 
\begin{equation}\label{identity}
\check{0}_{\mathbf{V}}=\hat{0}_{\mathbf{V}}
\end{equation}
holds then there exists a unique special equilibrium point, i.e. $$|\Fix(\mathbf{F})|=|\Fix(\mathbf{V})|=1.
$$
\end{corollary}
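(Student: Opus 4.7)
The statement is essentially a direct application of the machinery already developed, so my plan is to chain together three facts without introducing any new analysis.

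First, I would invoke the identity $\Fix(\mathbf{F})=\Fix(\mathbf{V})$ established just before the corollary (coming from the uniqueness of the $v_i$ in the polarized system \eqref{Starvd} via Lemma~\ref{lem:elem}). This reduces the problem to counting fixed points of $\mathbf{V}$, which is convenient because $\mathbf{V}$ is a non-increasing self-map of the invariant box $[0,S]$, so the period-two-point machinery of Section~\ref{sec:period} applies to it.

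Next, I would apply the extremal sandwich \eqref{Hbox1}: every $v\in \Fix(\mathbf{V})$ satisfies $\check{0}_{\mathbf{V}}\le v\le \hat{0}_{\mathbf{V}}$. Under the hypothesis \eqref{identity}, the box $[\check{0}_{\mathbf{V}},\hat{0}_{\mathbf{V}}]$ collapses to a single point, so any fixed point must equal $\check{0}_{\mathbf{V}}=\hat{0}_{\mathbf{V}}$. This proves uniqueness: $|\Fix(\mathbf{V})|\le 1$.

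For existence, I would cite Proposition~\ref{pro:empty} applied with $J=\{1,\ldots,M\}$, which guarantees $\widetilde{\Equ}_M=\Fix(\mathbf{F})\ne\emptyset$; alternatively, one can observe directly that the pair $(\check{0}_{\mathbf{V}},\hat{0}_{\mathbf{V}})\in \Fix_2(\mathbf{V})$ collapses under \eqref{identity} to a genuine fixed point of $\mathbf{V}$. Combining uniqueness and existence yields $|\Fix(\mathbf{F})|=|\Fix(\mathbf{V})|=1$. There is no real obstacle here: the whole weight of the argument was already carried by Proposition~\ref{pro:loop} and the polarization construction; the corollary just reads off the consequence when the two extremal branches of the period-two iteration coincide.
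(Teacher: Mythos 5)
Your argument matches the paper's reasoning exactly: the paper dismisses this as "immediately" following from the sandwich inequality \eqref{Hbox1} and the identity $\Fix(\mathbf{F})=\Fix(\mathbf{V})$, which is precisely the chain you spell out. Your additional remark on existence (via Proposition~\ref{pro:empty} or the collapse of the extremal period-two-point to a genuine fixed point) is a sensible explicit completion of the same argument, not a departure from it.
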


Conversely, \eqref{Hbox1} implies that the cardinality of fixed points $|\Fix(\mathbf{F})|$ is an obstacle for the coincidence relation \eqref{identity}. Furthermore, Example~\ref{ex1} above shows that for certain values of parameters of our system one has $|\Fix(\mathbf{F})|>1$, thus, one cannot expect in general the coincidence in  \eqref{identity}. Therefore it is important to know when \eqref{identity} holds. One such sufficient condition is presented below.

\begin{proposition}\label{cor1a}
Let $L_j$ be the $L^\infty$-Lipschiz constant of $\phi_j$. If $m=1$ and
\begin{equation}\label{rhogammamu}
\rho_1:=\sum_{j=1}^M\frac{\mu_j c_{1j}L_j}{D_1\gamma_j}<  1
\end{equation}
or $m\ge 2$ and
\begin{equation}\label{rhogamma}
\rho_m:=\max_{1\le i\le m} \sum_{j=1}^M\frac{(2\phi_j(S)-\mu_j)c_{ij}L_j}{D_i\gamma_j}\le 1
\end{equation}
then \eqref{identity} holds.
\end{proposition}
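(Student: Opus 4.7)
The plan is to work directly with the extremal period-two pair $(\check v, \hat v) := (\check{0}_{\mathbf{V}}, \hat{0}_{\mathbf{V}}) \in \Fix_2(\mathbf{V})$ of Section~\ref{sec:period}: because $\check v \le \hat v$, the identity \eqref{identity} reduces to showing $\hat v - \check v = 0$. Writing the defining equation \eqref{Starvd} once at $w = \check v$ (where $v = \hat v$) and once at $w = \hat v$ (where $v = \check v$) and subtracting coordinate by coordinate, one obtains for every $i$
\begin{equation*}
\hat v_i - \check v_i \;=\; \sum_{j=1}^{M} \frac{c_{ij}}{D_i}\Bigl[\mathbf{X}_j(\hat v)\,\phi_j^{(a)} - \mathbf{X}_j(\check v)\,\phi_j^{(b)}\Bigr],
\end{equation*}
where $\phi_j^{(a)} := \phi_j(\hat v_1,\ldots,\hat v_{i-1},\check v_i,\hat v_{i+1},\ldots,\hat v_m)$ and $\phi_j^{(b)} := \phi_j(\check v_1,\ldots,\check v_{i-1},\hat v_i,\check v_{i+1},\ldots,\check v_m)$. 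The strategy is to bound each bracket by a multiple of $\|\hat v - \check v\|_\infty$, sum over $j$, and maximize over $i$, closing an estimate of the form $\|\hat v - \check v\|_\infty \le \rho\,\|\hat v - \check v\|_\infty$ with $\rho$ equal to $\rho_1$ or $\rho_m$.

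For $m = 1$ a sharp algebraic cancellation is available: provided $\phi_j(\check v_1) \ge \mu_j$,
\begin{equation*}
\mathbf{X}_j(\hat v_1)\phi_j(\check v_1) - \mathbf{X}_j(\check v_1)\phi_j(\hat v_1) = \frac{\mu_j}{\gamma_j}\bigl(\phi_j(\hat v_1) - \phi_j(\check v_1)\bigr),
\end{equation*}
and the remaining boundary cases (either both terms vanish, or only $\mathbf{X}_j(\check v_1) = 0$) yield the same upper bound $\frac{\mu_j}{\gamma_j}\bigl(\phi_j(\hat v_1) - \phi_j(\check v_1)\bigr)$ after majorizing the surviving term using $\phi_j(\check v_1) \le \mu_j$. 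Combining with the Lipschitz estimate $\phi_j(\hat v_1) - \phi_j(\check v_1) \le L_j(\hat v_1 - \check v_1)$ and summing over $j$ delivers $\hat v_1 - \check v_1 \le \rho_1(\hat v_1 - \check v_1)$; the strict inequality $\rho_1 < 1$ in \eqref{rhogammamu} then forces $\hat v_1 = \check v_1$.

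For $m \ge 2$ no such cancellation is available, so one decomposes
\begin{equation*}
\mathbf{X}_j(\hat v)\phi_j^{(a)} - \mathbf{X}_j(\check v)\phi_j^{(b)} = \bigl(\mathbf{X}_j(\hat v) - \mathbf{X}_j(\check v)\bigr)\phi_j^{(a)} + \mathbf{X}_j(\check v)\bigl(\phi_j^{(a)} - \phi_j^{(b)}\bigr)
\end{equation*}
and combines the pointwise bounds $\phi_j^{(a)} \le \phi_j(S)$ and $\mathbf{X}_j(\check v) \le (\phi_j(S)-\mu_j)/\gamma_j$ (which follow from monotonicity of $\phi_j$, $\check v \le \hat v \le S$, and the survivability condition \eqref{survav}) with the Lipschitz estimates $\mathbf{X}_j(\hat v) - \mathbf{X}_j(\check v) \le (L_j/\gamma_j)\|\hat v - \check v\|_\infty$ and $|\phi_j^{(a)} - \phi_j^{(b)}| \le L_j\|\hat v - \check v\|_\infty$, the latter because the two arguments of $\phi_j$ differ componentwise by at most $\|\hat v - \check v\|_\infty$. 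The two contributions then combine into the coefficient $(2\phi_j(S)-\mu_j)L_j/\gamma_j$, and maximization over $i$ yields $\|\hat v - \check v\|_\infty \le \rho_m\|\hat v - \check v\|_\infty$.

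The main technical obstacle is the borderline nature of \eqref{rhogamma}: since the hypothesis is only the non-strict bound $\rho_m \le 1$, the estimate $\|\hat v - \check v\|_\infty \le \rho_m\|\hat v - \check v\|_\infty$ is by itself tautological at $\rho_m = 1$. Resolving this requires tracking strictness in the intermediate bounds, using that $\check v \ge \mathbf{V}(S) \gg 0$ (from the iterative construction and \eqref{VS}), so that $\phi_j(\check v)$ lies strictly below $\phi_j(S)$ whenever $\phi_j$ is not locally constant, and that in the non-degenerate case $\hat v \ll S$, so that $\phi_j^{(a)} < \phi_j(S)$; these strict inequalities propagate through the decomposition and are what ultimately force $\hat v - \check v = 0$ in the limiting case.
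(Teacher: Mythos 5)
Your overall strategy coincides with the paper's: work with the extremal period-two pair $(\check{0}_{\mathbf{V}},\hat{0}_{\mathbf{V}})$, write the two defining equations from \eqref{Starvd}, subtract, and close a contraction-type estimate in $\|\cdot\|_\infty$. Your $m=1$ case is essentially identical to the paper's (the algebraic identity you use is the paper's inequality $a(b-\mu)_+-b(a-\mu)_+\le\mu(b-a)$ in the case $a\ge\mu$). For $m\ge 2$ your route is genuinely different: you apply a product-rule decomposition
$\mathbf{X}_j(\hat v)\phi_j^{(a)}-\mathbf{X}_j(\check v)\phi_j^{(b)}=(\mathbf{X}_j(\hat v)-\mathbf{X}_j(\check v))\phi_j^{(a)}+\mathbf{X}_j(\check v)(\phi_j^{(a)}-\phi_j^{(b)})$,
whereas the paper first uses monotonicity ($\phi_j(\eta^{(i)})\le\phi_j(\eta)$, $\phi_j(\xi^{(i)})\ge\phi_j(\xi)$) to reduce the bracket to $f_j(\phi_j(\eta))-f_j(\phi_j(\xi))$ with $f_j(x)=x(x-\mu_j)_+$, and then invokes the Lipschitz constant $2\phi_j(S)-\mu_j$ of $f_j$. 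Both give the same coefficient $(2\phi_j(S)-\mu_j)L_j/\gamma_j$, but the paper's reduction carries the strict convexity of $f_j$ with it, which is exactly what handles the borderline case $\rho_m=1$: the secant slope of $f_j$ on $[\phi_j(\xi),\phi_j(\eta)]$ with $\phi_j(\xi)<\phi_j(\eta)$ is strictly less than $2\phi_j(S)-\mu_j$.

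The place where your argument has a real gap is the strictness mechanism you describe for $\rho_m=1$. You invoke $\hat v\ll S$ so that $\phi_j^{(a)}<\phi_j(S)$, and $\phi_j(\check v)<\phi_j(S)$ ``whenever $\phi_j$ is not locally constant''. Neither claim is available: $\hat{0}_{\mathbf{V}}$ can equal $S$ (this happens precisely when $\phi_j(\check{0}_{\mathbf{V}})\le\mu_j$ for all $j$, and the hypothesis $\rho_m\le1$ does not preclude this), and the standing assumptions \eqref{MM2b}--\eqref{MM5} permit $\phi_j$ to be locally constant (as in the Liebig minimum \eqref{Liebm}). So as stated, the closing step of your $m\ge2$ case fails. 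The decomposition itself can still be saved: observe that for any $j$ with $c_{ij}L_j>0$ the two factor-bounds cannot both be tight. Tightness of the first summand forces $\phi_j^{(a)}=\phi_j(S)$ together with $\mathbf{X}_j(\hat v)-\mathbf{X}_j(\check v)=\frac{L_j}{\gamma_j}\|\hat v-\check v\|_\infty>0$, hence $\phi_j(\hat v)=\phi_j(S)$ and $\phi_j(\hat v)>\phi_j(\check v)$, so $\phi_j(\check v)<\phi_j(S)$; tightness of the second summand forces $\mathbf{X}_j(\check v)=(\phi_j(S)-\mu_j)/\gamma_j$, i.e.\ $\phi_j(\check v)=\phi_j(S)$. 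These are incompatible, so every $j$ contributes a strict inequality, and the strictness survives the sum over $j$. That is the correct repair of your step; it is not what you wrote, and it is worth noting that the paper's auxiliary function $f_j$ encapsulates this observation more cleanly.
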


\begin{proof}
Assume by contradiction that $
\hat{0}_{\mathbf{V}}=(\eta_1,\ldots,\eta_m)>\check{0}_{\mathbf{V}} =(\xi_1,\ldots,\xi_m)$ and rewrite \eqref{hin} as
\begin{equation}\label{bbal}
\left\{
\begin{array}{ll}
S_i-\xi_i=
\sum_{j=1}^M\frac{c_{ij}}{D_i\gamma_j}(\phi_j(\eta)-\mu_j)_+\phi_j(\eta^{(i)}),
\qquad \eta^{(i)}=(\eta_1,\ldots,\xi_i,\ldots\eta_m),\\
S_i-\eta_i=
\sum_{j=1}^M\frac{c_{ij}}{D_i\gamma_j}(\phi_j(\xi)-\mu_j)_+\phi_j(\xi^{(i)}), \qquad \,\xi^{(i)}=(\xi_1,\ldots,\eta_i,\ldots\xi_m)
\end{array}
\right.
\end{equation}
First let us consider the case $m=1$. Then \eqref{bbal} takes a simpler form
\begin{equation}\label{bbal1}
\left\{
\begin{array}{ll}
S_1-\xi_1=
\sum_{j=1}^M\frac{c_{1j}}{D_1\gamma_j} (\phi_j(\eta_1)-\mu_j)_+\phi_j(\xi_1),
\\
S_1-\eta_1=
\sum_{j=1}^M\frac{c_{1j}}{D_1\gamma_j} (\phi_j(\xi_1)-\mu_j)_+\phi_j(\eta_1).
\end{array}
\right.
\end{equation}
A simple analysis shows that for all $0\le a\le b$, $\mu\ge 0$, the following inequality is  true
$$
a(b-\mu)_+-b(a-\mu)_+=\left\{
\begin{array}{rr}
0& \text{if }a\le b\le \mu\\
a(b-\mu)& \text{if }a\le \mu\le b\\
\mu(b-a)& \text{if }\mu\le a\le b
\end{array}
\right.
\le \mu(b-a)
$$
therefore, taking into account that $\phi_j(\xi_1)\le \phi_j(\eta_1)$ and subtracting relations in \eqref{bbal1} one obtains
$$
0<\eta_1-\xi_1\le \sum_{j=1}^M\frac{c_{1j}\mu_j}{D_1\gamma_j}(\phi_j(\eta_1)-\phi_j(\xi_1)) \le \rho_1 (\eta_1-\xi_1)< (\eta_1-\xi_1),
$$
 a contradiction follows.

Now, let $m\ge 2$. Since $\xi^{(i)}\ge \xi$ and $\eta^{(i)}\le \eta$, we obtain on subtracting equations in \eqref{bbal} that
\begin{equation}\label{lipp}
\eta_i-\xi_i\le \sum_{j=1}^M\frac{c_{ij}}{D_i\gamma_j}(f_j(\phi_j(\eta))-f_j(\phi_j(\xi))),
\end{equation}
where $f_j(x)=x(x-\mu_j)_+$ has the Lipschiz constant $(2b-\mu_j)$ on $0\le x\le b$. Combining this with the fact that $\phi_j(\eta)\le\phi_j(S)$ and $\mu_j< \phi_j(S)$ we obtain from \eqref{lipp} and using the definition of $\rho$ that
$$
\|\eta-\xi\|_{\infty}< \rho \cdot \|\eta-\xi\|_{\infty}\le \|\eta-\xi\|_{\infty},
$$
where $\|x\|_{\infty}=\max_{1\le i\le m}|x_i|$. This immediately yields the desired contradiction.
\end{proof}


In general one has from \eqref{VS} that $\mathbf{V}(S)\ll S$ and $\mathbf{V}^2(S)\le S$, hence the following simple bilateral estimates hold:
\begin{equation}\label{local1}
0\ll \mathbf{V}(S)\le \check{0}_{\mathbf{V}} \le \hat{0}_{\mathbf{V}}\le \mathbf{V}^2(S)\le S.
\end{equation}
The latter estimate \eqref{local1} is optimal in general. Indeed, if  $\frac{c_{ij}}{D_i\gamma_j}$ are large enough, $\mathbf{V}(S)$ can be made arbitrarily small, for instance such that  $\phi_j(\mathbf{V}(S))\le \mu_j$, which yields $\mathbf{V}^2(S)=S$ and therefore $(\check{0}_{\mathbf{V}},\hat{0}_{\mathbf{V}})=(\mathbf{V}(S), \,S)$.

\begin{proposition}\label{pro:w}
For any $w\in [0,S]$,
\begin{equation}\label{Phi}
[\mathbf{F}(w\wedge \mathbf{V}(w))]_+\le \mathbf{V}(w)\le
\mathbf{F}(w\vee \mathbf{V}(w)).
\end{equation}
In particular,
\begin{equation}\label{Phi1}
[\mathbf{F}(S)]_+\le \mathbf{V}(S)\le
 \mathbf{V}^2(S)\le
\mathbf{F}([\mathbf{F}(S)]_+).
\end{equation}
Here $x\vee y$ (resp. $x\wedge y$) denote the vector whose $i$th coordinate is $\max(x_i,y_i)$ (resp. $\min(x_i,y_i)$).

\end{proposition}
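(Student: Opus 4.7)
The plan is to reduce the bilateral estimate \eqref{Phi} to a coordinatewise monotonicity comparison between the implicit equation \eqref{Starvd} defining $v:=\mathbf{V}(w)$ and the explicit formula for $\mathbf{F}$; specialising $w=S$ will then yield \eqref{Phi1}.

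Fix $i\in\{1,\dots,m\}$ and introduce the hybrid vector $w^{(i)}:=(w_1,\dots,w_{i-1},v_i,w_{i+1},\dots,w_m)$, so that \eqref{Starvd} reads
\[
S_i-v_i\;=\;\sum_{j=1}^M \frac{c_{ij}}{D_i}\,\mathbf{X}_j(w)\,\phi_j(w^{(i)}).
\]
The crucial elementary observation, to be checked coordinate by coordinate, is the double sandwich
\[
w\wedge v\;\le\;w\;\le\;w\vee v,\qquad w\wedge v\;\le\;w^{(i)}\;\le\;w\vee v,
\]
where the second chain uses that the $i$-th coordinate of $w^{(i)}$ is $v_i\in[\min(w_i,v_i),\max(w_i,v_i)]$ while each other coordinate equals $w_k\in[\min(w_k,v_k),\max(w_k,v_k)]$.

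Combined with the coordinatewise monotonicity of $\mathbf{X}_j$ (inherited from $\phi_j$) and of $\phi_j$ itself, this produces the termwise estimate
\[
\mathbf{X}_j(w\wedge v)\,\phi_j(w\wedge v)\;\le\;\mathbf{X}_j(w)\,\phi_j(w^{(i)})\;\le\;\mathbf{X}_j(w\vee v)\,\phi_j(w\vee v).
\]
Multiplying by $c_{ij}/D_i$ and summing over $j$, the outer sums are exactly $S_i-\mathbf{F}(w\wedge v)_i$ and $S_i-\mathbf{F}(w\vee v)_i$, so the sandwich translates at once into the required bilateral inequality between $v_i$ and $\mathbf{F}(\cdot)_i$ at the two extremes. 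Taking positive parts where needed (recall $v\ge 0$) yields \eqref{Phi}.

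For \eqref{Phi1} I specialise to $w=S$: since $\mathbf{V}(S)\le S$, one has $S\wedge\mathbf{V}(S)=\mathbf{V}(S)$ and $S\vee\mathbf{V}(S)=S$, which reads off the endpoint $[\mathbf{F}(S)]_+\le\mathbf{V}(S)$ directly from \eqref{Phi}. The middle inequality $\mathbf{V}(S)\le\mathbf{V}^2(S)$ is immediate from $\mathbf{V}(S)\le S$ and the non-increasingness of $\mathbf{V}$ established just after \eqref{Starvd}. For the right-hand bound I apply \eqref{Phi} once more with $w=\mathbf{V}(S)$: since $\mathbf{V}(S)\le\mathbf{V}^2(S)$, the relevant minimum is $\mathbf{V}(S)$ itself, so \eqref{Phi} produces an inequality between $\mathbf{V}^2(S)$ and $\mathbf{F}(\mathbf{V}(S))$; combining with $[\mathbf{F}(S)]_+\le\mathbf{V}(S)$ and the non-increasingness of $\mathbf{F}$ then delivers $\mathbf{V}^2(S)\le\mathbf{F}([\mathbf{F}(S)]_+)$.

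The only step that demands genuine care is the coordinatewise sandwich $w\wedge v\le w^{(i)}\le w\vee v$, since the hybrid vector interleaves coordinates of $w$ and $v$ and must be controlled against both extremes simultaneously while the factor $\mathbf{X}_j(w)$ is simultaneously sandwiched. Once that is in place, the product monotonicity, the summation, and the specialisation to $w=S$ are all routine bookkeeping.
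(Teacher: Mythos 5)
Your coordinatewise sandwich is exactly the idea behind the paper's proof, and the termwise estimate
\[
\mathbf{X}_j(w\wedge v)\,\phi_j(w\wedge v)\;\le\;\mathbf{X}_j(w)\,\phi_j(w^{(i)})\;\le\;\mathbf{X}_j(w\vee v)\,\phi_j(w\vee v)
\]
is correct, since $w\wedge v\le w,w^{(i)}\le w\vee v$ and $\mathbf{X}_j,\phi_j$ are nonnegative and nondecreasing. However, if you actually carry out the summation step you claim is ``routine bookkeeping'', it gives
\[
S_i-\mathbf{F}_i(w\wedge v)\;\le\;S_i-v_i\;\le\;S_i-\mathbf{F}_i(w\vee v),
\qquad\text{i.e.}\qquad
\mathbf{F}(w\vee v)\;\le\;v\;\le\;\mathbf{F}(w\wedge v),
\]
which has $\vee$ and $\wedge$ in the \emph{opposite} positions from the displayed \eqref{Phi}. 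You do not notice this mismatch and assert that the estimate ``yields \eqref{Phi}'' --- it does not, as printed. In fact \eqref{Phi} as printed cannot be right: taking $w=S$ there would give $\mathbf{V}(S)\le\mathbf{F}(S\vee\mathbf{V}(S))=\mathbf{F}(S)$, whereas the defining equation \eqref{Starvd} together with $\phi_j(S_1,\ldots,\mathbf{V}_i(S),\ldots,S_m)\le\phi_j(S)$ gives the reverse bound $\mathbf{V}(S)\ge\mathbf{F}(S)$, and \eqref{Phi1} itself needs $[\mathbf{F}(S)]_+\le\mathbf{V}(S)$. So the correct statement is $[\mathbf{F}(w\vee\mathbf{V}(w))]_+\le\mathbf{V}(w)\le\mathbf{F}(w\wedge\mathbf{V}(w))$, which is precisely what your argument establishes. (The paper's own one-line justification, which begins with ``$w\vee v\le(w_1,\ldots,w_{i-1},v_i,\ldots,w_m)$'', has the same sign confusion and moreover conflates $\mathbf{X}_j(w^{(i)})$ with $\mathbf{X}_j(w)$ when it writes $\mathbf{F}_i(w^{(i)})=v_i$; your two-factor sandwich avoids that second slip.)

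The derivation of \eqref{Phi1} you give is internally consistent only with the corrected version of \eqref{Phi}: you read off $[\mathbf{F}(S)]_+\le\mathbf{V}(S)$ by taking the $\vee$-side endpoint at $w=S$, and $\mathbf{V}^2(S)\le\mathbf{F}(\mathbf{V}(S))$ by taking the $\wedge$-side endpoint at $w=\mathbf{V}(S)$ --- both use the swapped form, not the printed one. So the mathematics is sound, but you should flag explicitly that \eqref{Phi} as written has $\vee$ and $\wedge$ interchanged; as it stands, your write-up claims to prove the printed inequality while silently using the corrected one.
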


\begin{proof}
Let $v=\mathbf{V}(w)$. Since for any $i$, $w\vee v\le (w_1,\ldots,w_{i-1},v_i,\ldots,w_m)$, it follows from the monotonicity of $\mathbf{F}$ and \eqref{Starvd} that
\begin{align*}
\mathbf{F}_i(w\vee v)&\ge \mathbf{F}_i(w_1,\ldots,w_{i-1},v_i,\ldots,w_m)\\
&=S_i-\sum_{j=1}^{M} \frac{c_{ij}}{D_i}   \mathbf{X}_j(w) \phi_j(w_1,\ldots,w_{i-1},v_i,\ldots,w_m)\\
&=v_i=\mathbf{V}_i(w),
\end{align*}
which yields the right inequality in \eqref{Phi}. The left one follows by a similar argument from $w\wedge v\ge (w_1,\ldots,w_{i-1},v_i,\ldots,w_m)$ and the fact that $\mathbf{V}(w)\ge0$. Then \eqref{Phi1} follows from $0\ll \mathbf{V}(S)\le \mathbf{V}^2(S)\le S$  and \eqref{Phi}.
\end{proof}

\section{Bilateral estimates} \label{sec:main}
As it was pointed out before, Example~\ref{ex1} shows that a priori the asymptotic behaviour of solutions to \eqref{HX1}--\eqref{HV1} can be rather complicated for $m\ge 2$. On the other hand, the result below shows that the global dynamics is completely controlled by the finite-dimensional fixed point problem \eqref{fixedp} and the characteristic parameters in \eqref{hin}.

\begin{theorem}\label{th1}
Let $(x(t), v(t))$ be the solution of \eqref{HX1}, \eqref{HV1}, \eqref{Idata}. Then in notation of Section~\ref{sec:period} and \ref{sec:equil}:
\begin{align}
\check{0}_{\mathbf{V}}\le \lim\inf_{t\to\infty}v(t)&\le \lim\sup_{t\to\infty}v(t)\le \hat{0}_{\mathbf{V}},\label{bilateral1}\\
\mathbf{X}(\check{0}_{\mathbf{V}})\le \lim\inf_{t\to\infty}x(t)&\le \lim\sup_{t\to\infty}x(t)\le \mathbf{X}(\hat{0}_{\mathbf{V}}).\label{bilateral2}
\end{align}
In particular, if $\check{0}_{\mathbf{V}}=\hat{0}_{\mathbf{V}}$ then all solutions of \eqref{HX1}--\eqref{HV1} converge to a unique special equilibrium point.
\end{theorem}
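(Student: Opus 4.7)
The plan is to show that the componentwise pair $(\underline v, \bar v) := (\liminf_{t\to\infty} v(t), \limsup_{t\to\infty} v(t))$ satisfies the hypothesis \eqref{xyeq} of Proposition~\ref{pro:est} with $G = \mathbf{V}$. Then \eqref{bilateral1} will follow from the extremal property of $(\check{0}_{\mathbf{V}}, \hat{0}_{\mathbf{V}})$, and \eqref{bilateral2} from \eqref{bilateral1} by feeding the monotonicity of $\phi_j$ back into \eqref{HX1}.

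The core technical step I would isolate is the following \emph{comparison module}: if $\bar v \le w$ for some $w \in [0,S]$, then $\underline v \ge \mathbf{V}(w)$; symmetrically, $\underline v \ge w$ implies $\bar v \le \mathbf{V}(w)$. To prove the first implication, fix $\epsilon > 0$ and $T$ so that $v(t) \le w + \epsilon \mathbf{1}$ for $t \ge T$. By \eqref{MM5}, $\phi_j(v(t)) \le \phi_j(w + \epsilon \mathbf{1})$, and comparison of \eqref{HX1} with the logistic ODE $\dot y = y(\phi_j(w + \epsilon \mathbf{1}) - \mu_j - \gamma_j y)$ gives $\limsup x_j(t) \le \mathbf{X}_j(w + \epsilon \mathbf{1})$, hence $\limsup x_j(t) \le \mathbf{X}_j(w)$ as $\epsilon \to 0$. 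Substituting back into \eqref{HV1} and freezing the off-diagonal coordinates at $w_k + \epsilon$ via \eqref{MM5}, I would obtain, for $t$ sufficiently large, the differential inequality
\begin{equation*}
\dot v_i(t) \ge D_i(S_i - v_i(t)) - \sum_{j=1}^M c_{ij}(\mathbf{X}_j(w) + \epsilon)\phi_j(w_1 + \epsilon, \ldots, w_{i-1} + \epsilon, v_i(t), w_{i+1} + \epsilon, \ldots, w_m + \epsilon).
\end{equation*}
The right-hand side is Lipschitz and strictly decreasing in $v_i$, and its unique zero, by \eqref{Starvd} and Lemma~\ref{lem:elem}, converges to $\mathbf{V}_i(w)$ as $\epsilon \to 0$. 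Lemma~\ref{lem:limit} then yields $\liminf v_i(t) \ge \mathbf{V}_i(w)$; the symmetric implication is parallel.

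Applying the module with $w = \bar v$ and $w = \underline v$ gives $\mathbf{V}(\bar v) \le \underline v$ and $\bar v \le \mathbf{V}(\underline v)$, that is, precisely \eqref{xyeq} with $x = \underline v$, $y = \bar v$. Proposition~\ref{pro:est} then delivers $(a,b) \in \Fix_2(\mathbf{V})$ with $\check{0}_{\mathbf{V}} \le a \le \underline v$ and $\bar v \le b \le \hat{0}_{\mathbf{V}}$, which is \eqref{bilateral1}; then \eqref{bilateral2} follows by one further application of the $x_j$ comparison, evaluated at the endpoints $\check{0}_{\mathbf{V}}$ and $\hat{0}_{\mathbf{V}}$. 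When $\check{0}_{\mathbf{V}} = \hat{0}_{\mathbf{V}} =: v^*$, the bilateral bounds collapse to $v(t) \to v^*$ and $x(t) \to \mathbf{X}(v^*)$, and uniqueness of this special equilibrium point follows immediately from \eqref{Hbox1}. The main obstacle I anticipate is the coupling of all coordinates of $v$ inside each $\phi_j(v)$ in \eqref{HV1}, which defeats any naive one-dimensional comparison; this is precisely what the polarization \eqref{Starvd} circumvents by decoupling the components (at the cost of replacing $\mathbf{F}$ by $\mathbf{V}$, which has the same fixed points), while \eqref{MM5} ensures that the off-diagonal freezing does not reverse the inequalities.
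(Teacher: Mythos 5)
Your proposal is correct, but it follows a genuinely different route from the paper. The paper converts \eqref{HX1}--\eqref{HV1} into a fixed-point problem $v=\mathscr{V}(v)$ for a non-increasing operator on the function space $C^1_S[0,T]$, shows by compactness that the bracketing iterates $v^k=\mathscr{V}^k(0)$ converge in $C^1$, identifies their common limit with the true solution by uniqueness for the Cauchy problem, and only then passes to the $t\to\infty$ limits $\bar v^k=\mathbf{V}^k(0)$ and invokes \eqref{inneq1}. You instead work directly with $\underline v=\liminf v(t)$ and $\bar v=\limsup v(t)$, prove an $\epsilon$-dilated one-sided comparison module that translates an asymptotic bound on $v$ into the opposite bound on $\mathbf{V}(\cdot)$, and then feed $(\underline v,\bar v)$ into Proposition~\ref{pro:est} --- a result the paper proved but does not actually deploy here. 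Your approach avoids the function-space machinery and compactness entirely, at the modest cost of $\epsilon$-bookkeeping, while the paper's extra machinery yields the stronger by-product that $v(t)$ is itself the pointwise limit of the iterates $\mathscr{V}^k(0)(t)$. Two small caveats worth tightening: (i) in the comparison module you cite Lemma~\ref{lem:limit} to pass from the differential \emph{inequality} $\dot v_i\ge F_i^\epsilon(v_i)$ to the asymptotic bound, but that lemma applies to equations; you need to first compare, via Lemma~\ref{lem:elem2}, against the auxiliary ODE $\dot u=F_i^\epsilon(u)$ (with matching data at time $T$), after which the elementary convergence of a strictly decreasing autonomous scalar ODE to its unique root suffices --- Lemma~\ref{lem:limit} is then not actually needed since $F_i^\epsilon$ is autonomous; (ii) you should note that your perturbation constants in $x_j\le\mathbf{X}_j(w)+\epsilon$ and $v_k\le w_k+\epsilon$ arrive at different times $T'$ and $T$ and must be combined on $t\ge\max(T,T')$, but this is routine.
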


\begin{proof}
%

As the first step we reformulate the original system as an appropriate  integral equation for unknown function $v(t)$. Let $w(t):[0,\infty)\to [0,S]$ be a a continuous vector-function with $w(0)=v(0)$ and having a limit $\lim_{t\to \infty}w(t)=\bar w$. Then
\begin{equation}
  \mathscr{X}_i(w)(t)=x_i(0)\left(e^{- \int_0^t (\phi_j(w(s))-\mu_j) ds} + \gamma_j x_i(0) \int_0^t e^{- \int_{t_1}^t (\phi_j(w(s))-\mu_j) ds}  dt_1\right)^{-1},
\label{AppX}
\end{equation}
solves \eqref{HX1} with $v(t)$ replaced by $w(t)$. Clearly, $\mathscr{X}(w)(t)$ is a nondecreasing functional of $w$, $\mathscr{X}(w)(0)=x(0)$ and one can readily verify that
\begin{equation}\label{xlimit}
\lim_{t\to\infty}\mathscr{X}_i(w)(t)=\half1{\gamma_j} (\phi_j(v) -\mu_j)_{+}= \mathbf{X}_i(\bar w).
\end{equation}
Next, let  $\mathscr{V}(w)(t)$ denote the solution $u(t)$ of the system below (obtained from \eqref{HV1} with $x(t)$ replaced by \eqref{AppX}):
\begin{equation}\label{HV1a}
\begin{split}
     \frac{du_i}{dt}&=D_i(S_i -u_i)   -  \sum_{i=1}^M c_{ ij} \; \phi_j(w_1,\ldots,w_{i-1},u_i,\ldots,w_m)\mathscr{X}_j(w)(t)\\ u_i(0)&=v_i(0), \qquad i=1,\dots, m.
\end{split}
\end{equation}
Let $C^1_S[0,T]$ denote the set of $C^1$-vector-functions $u(t)$ on $[0,T]$ such that $0\le v(t)\le S$. Then
\begin{equation}\label{Qmap}
\mathscr{V}:C^1_S[0,T]\to C^1_S[0,T], \quad \forall T>0.
\end{equation}
Next, note that $\mathscr{V}(w)$ is a non-increasing functional of $w$. Indeed, let $0\le w(t)\le \widetilde{w}(t)$ for all $t\ge0$, and let $u_i(t)$ and $\widetilde{u}_i(t)$ be the corresponding solutions of \eqref{HV1a}. Denote by $F_i(u_i(t),t)$ and $\widetilde{F}_i(\widetilde{u}_i(t),t)$ the right hand side of \eqref{HV1a} corresponding to $w(t)$ and $\widetilde{w}(t)$ respectively.  Then the $F_i(z,t)$ and $\widetilde{F}_i(z,t)$ satisfies the conditions of Lemma~\ref{lem:elem2} and $u_i(0)=\widetilde{u}_i(0)$, therefore $u_i(t)\ge \widetilde{u}_i(t)$ for all $t$, as desired.

Furthermore, we claim that
\begin{equation}\label{vlimit}
\lim_{t\to\infty}\mathscr{V}(w)(t)= \mathbf{V}(\bar w).
\end{equation}
Indeed, rewrite \eqref{HV1a} as $u_i'(t)=F_i(u_i(t),t)$, where
$$
F_i(z,t):=D_i(S_i -z)   -  \sum_{j=1}^M c_{ ij} \; \phi_j(w_1(t),\ldots,z,\ldots,w_m(t))\mathscr{X}_j(w)(t).
$$
Then $F_i(z,t)$ obviously satisfies conditions (a) and (b) of Lemma~\ref{lem:limit} with $c=D_i$. To verify (c), note that by \eqref{xlimit} for any $z\in [0,S]$:
$$
\bar F_i(z):=\lim_{t\to\infty} F_i(z,t)=D_i(S_i -z)   -  \sum_{j=1}^M c_{ ij} \; \phi_j(\bar w_1,\ldots,z,\ldots,\bar w_m)\mathbf{X}_j(\bar w).
$$
Comparing the latter expression with \eqref{Starvd}, we conclude that $z=\mathbf{V}_i(\bar w)$ is the unique root of $\bar F_i(z)=0$ in $[0,S]$. Now, let $0\le z_i(t)<S$ be the unique solution  of $F_i(z_i(t),t)=0$, $t\ge 0$. Suppose that $t_k\nearrow\infty$ realizes $\bar z:=\lim\sup_{t\to\infty} z_i(t)$. Then
$$
0=\lim_{k\to \infty}F_i(z_i(t_k),t_k)=\bar F_i(\bar z) \quad \Rightarrow \quad \bar z=\mathbf{V}_i(\bar w).
$$
Similarly one shows that $\mathbf{V}_i(\bar w)=\lim\inf_{t\to\infty} z_i(t)$. Thus, $\lim_{t\to\infty} z_i(t)=\mathbf{V}_i(\bar w)$ exists, as desired. Applying Lemma~\ref{lem:limit} yields \eqref{vlimit}.

In the present setting, if $(x(t),v(t))$ is the solution of \eqref{HX1}, \eqref{HV1}, \eqref{Idata} then $v=v(t)$  satisfies  the  fixed point problem
\begin{equation}
\label{Appw}
v=\mathscr{V}(v),
\end{equation}
then $x=x(t)$ is recovered by
\begin{equation}\label{xw}
x=\mathscr{X}(v).
\end{equation}

Now we show that $v(t)$ can be  obtained as the limit of  iterations
$$
v^{k}(t)=\mathscr{V}^k(v^0)(t), \quad k\ge1, \quad \text{where }v^0(t)\equiv 0.
$$
As $\mathscr{V}$ is non-increasing and $\mathscr{V}(v)=v$, one has
$$
0=v^0\le v\le v^{1}\le S.
$$
Since $\mathscr{V}^2$ is non-decreasing and by \eqref{Qmap} $v^2\ge 0=v^0$,  one readily obtains
$$
v^0\leq v^2\leq\ldots v^{2k}\leq \ldots v\le\ldots \leq v^{2k-1}\leq\ldots\leq v^3\leq v^1
$$
and $v^k\in C^1_S[0,\infty)$. For any fixed $T>0$, the operator $\mathscr{V}:C^1_S[0,T]\rightarrow C^1_S[0,T]$ is compact, hence both the odd $v^{2k-1}(t)$ and even $v^{2k}(t)$ terms  conerge in $C^1[0,T]$, therefore the following limits are well-defined for any $t\ge 0$:
\begin{equation}\label{vita1}
\check{v}(t)=\lim_{k\to\infty}v^{2k}(t),\;\;\;
\hat{v}(t)=\lim_{k\to\infty}v^{2k+1}(t),
\end{equation}
and
\begin{equation}\label{period}
\mathscr{V}(\check{v})=\hat{v}\quad \mathscr{V}(\hat{v})=\check{v}.
\end{equation}
 Since $v^0=0\le v$ we also have by \eqref{Appw} that $v^{2k}\le v\le v^{2k+1}$, thus implying
\begin{equation}\label{vnepod}
\check v(t)\le v(t)\le \hat v(t), \qquad
\check x(t)\le x(t)\le \hat x(t),
\end{equation}
where $\hat{x}=\mathscr{X}(\hat{v})$, $\check{x}=\mathscr{X}(\check{v})$, and $(\check{x},\hat{v})$ and $(\hat{x},\check{v})$ solve respectively
\begin{align*}
(\check{x},\hat{v}):&\quad \frac{d\check{x}_j}{dt}=\check{x}_j ( \phi_j(\check{v})- \mu_j  -  \gamma_{j} \check{x}_j), \qquad
\frac{d\hat{v}_i}{dt}=D_i(S_i -\hat{v}_i)   -  \sum_{j=1}^M c_{ij} \; \check{x}_j \; \phi_j(\hat{v}),\\
(\hat{x},\check{v}):&\quad \frac{d\hat{x}_j}{dt}=\hat{x}_i ( \phi_j(\hat{v})- \mu_j -  \gamma_{j} \hat{x}_j), \qquad
\frac{d\check{v}_i}{dt}=D_i(S_i -\check{v}_i)   -  \sum_{j=1}^M c_{ij} \; \hat{x}_j \; \phi_j(\check{v}),
\end{align*}
Taking the difference  yields that $(\xi,\eta):=(\hat{x}-\check{x},\hat{v}-\check{v})$ satisfies a homogeneous linear system of ODEs with  bounded coefficients (recall that $\phi_j$ are  Lipschiz). Since $(\xi,\eta)$ has the zero Cauchy data we conclude by uniqueness for the Cauchy problem and \eqref{vnepod} that $\check{v}(t)= \hat{v}(t)=v(t)$ and $\check{x}(t)= \hat{x}(t)=x(t)$. In summary, for any fixed $t>0$ one has
\begin{equation}\label{followsby}
x(t)=\lim_{k\to \infty} x^k(t),\qquad v(t)=\lim_{k\to \infty} v^k(t),
\end{equation}
 where by \eqref{vlimit} $\bar v^k:=\lim_{t\to \infty} v^k(t)=\mathbf{V}^k(0)$, $\bar v^0=0.$
Applying the results of Section~\ref{sec:period} to \eqref{vita1} yields
$\lim_{k\to \infty} \bar v^{2k}=\check{0}_{\mathbf{V}},$ $\lim_{k\to \infty} \bar v^{2k-1}=\hat{0}_{\mathbf{V}},$
which proves \eqref{bilateral1}. Similarly, \eqref{bilateral2} follows from \eqref{xlimit} and \eqref{followsby}.
\end{proof}

Combining the obtained estimates with Corollary~\ref{cor1a} implies the following global stability result.

\begin{corollary}[\textbf{Global stability}]\label{cor1}
If $m=1$ and \eqref{rhogammamu} holds or $m\ge 2$ and \eqref{rhogamma} holds then  \eqref{HX1}-\eqref{HV1} is globally stable: any solution with Cauchy data \eqref{Idata} converges to a unique equilibrium point $\check{0}_{\mathbf{V}}=\hat{0}_{\mathbf{V}}$.
\end{corollary}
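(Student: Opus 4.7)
The plan is a direct two-step synthesis of results already in place. First, I would apply Proposition~\ref{cor1a} to the hypothesis: in the case $m=1$ the assumed bound \eqref{rhogammamu} is exactly the condition $\rho_1 < 1$, and in the case $m\ge 2$ the bound \eqref{rhogamma} is exactly $\rho_m \le 1$. Either assumption is precisely what Proposition~\ref{cor1a} requires in order to conclude the coincidence \eqref{identity}, namely $\check{0}_{\mathbf{V}} = \hat{0}_{\mathbf{V}}$.

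Second, with this equality in hand, I would invoke the final assertion of Theorem~\ref{th1}: under \eqref{identity} the bilateral estimates \eqref{bilateral1} and \eqref{bilateral2} collapse, so that every solution issuing from admissible data \eqref{Idata} satisfies $\lim_{t\to\infty} v(t) = \check{0}_{\mathbf{V}}$ and $\lim_{t\to\infty} x(t) = \mathbf{X}(\check{0}_{\mathbf{V}})$. Since by construction $(\mathbf{X}(\check{0}_{\mathbf{V}}), \check{0}_{\mathbf{V}})$ solves the fixed-point system \eqref{fixedp}--\eqref{xdef}, this common limit is a special equilibrium; moreover, uniqueness of this equilibrium follows a fortiori from \eqref{Hbox1}, which forces every element of $\Fix(\mathbf{V})$ to coincide with $\check{0}_{\mathbf{V}}$ once \eqref{identity} holds.

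Since the corollary is a direct combination of two already-established statements, I do not expect any substantive obstacle to overcome; the only verification needed is the matching of hypotheses (strict $\rho_1 < 1$ for $m=1$, non-strict $\rho_m \le 1$ for $m\ge 2$), which is exactly the form in which Proposition~\ref{cor1a} is stated. No additional Lyapunov construction, compactness argument, or linearisation is required: all of the analytic work is already contained in Theorem~\ref{th1} (which reduces large-time dynamics to iterations of $\mathbf{V}$) and in Proposition~\ref{cor1a} (which under the stated Lipschitz-type bounds collapses the period-two-point $(\check{0}_{\mathbf{V}},\hat{0}_{\mathbf{V}})$ to a genuine fixed point).
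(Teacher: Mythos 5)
Your proposal is correct and follows exactly the paper's argument: the paper states Corollary~\ref{cor1} as an immediate combination of Proposition~\ref{cor1a} (which gives $\check{0}_{\mathbf{V}}=\hat{0}_{\mathbf{V}}$ under \eqref{rhogammamu} or \eqref{rhogamma}) with the final assertion of Theorem~\ref{th1}. Nothing further is needed.
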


%
%

Numerical simulations in  \cite{Huis1999} show that certain solutions of the standard model with $\gamma_j=0$ and $m\ge 3$ have periodic (chaotic) dynamics. Corollary~\ref{cor1} above shows that if the self-limitation constants $\gamma_j$ or dilution rates $D_i$ are large enough, the global behaviour of the modified model becomes stable for any choice of $m$ and $M$.

In fact, one can chose the parameters of the system such that the strong persistence holds, see the corollary below. To present our result we need to define an analogue of the lowest break-even concentration $R^*$ in \eqref{rstarr} for general response functions $\phi_j$. Let us consider the set
\begin{equation}\label{set}
\mathscr{R}:=\{v\in [0,S]: \phi_j(v)> \mu_j \,\,\, \text{for all }j\}.
\end{equation}
Note that by \eqref{survav}, $\mathscr{R}\ne \emptyset$.

\begin{corollary}[Strong persistence]\label{cor2}
In notation of Corollary~\ref{cor1}, there exists $\rho_0=\rho_0(\mathscr{R})>0$ such that if $\rho\le \rho_0$
then any solution of \eqref{HX1}-\eqref{HV1}-\eqref{Idata} converges to a unique equilibrium point with
$$
\lim_{t_\to \infty}x_i(t)>0\quad \text{ for all }1\le i\le M.
$$
\end{corollary}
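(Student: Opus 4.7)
The plan is to combine Corollary~\ref{cor1} with an a priori estimate forcing the unique equilibrium into $\mathscr{R}$. First I would apply Corollary~\ref{cor1}: provided $\rho$ satisfies the hypotheses there, every solution of \eqref{HX1}--\eqref{HV1}--\eqref{Idata} converges to the unique equilibrium point $(\mathbf{X}(v^*),v^*)$ with $v^*=\check{0}_{\mathbf{V}}=\hat{0}_{\mathbf{V}}$. Since $\lim_{t\to\infty}x_j(t)=\mathbf{X}_j(v^*)=\gamma_j^{-1}(\phi_j(v^*)-\mu_j)_+$, strong persistence is equivalent to the membership $v^*\in\mathscr{R}$. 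The task thus reduces to guaranteeing that $v^*\in\mathscr{R}$ for all sufficiently small $\rho$.

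Next I would derive a quantitative estimate showing that small $\rho$ drives $v^*$ arbitrarily close to the supply point $S$. By the bilateral bound \eqref{local1}, $v^*\ge\mathbf{V}(S)$, so it suffices to control $S-\mathbf{V}(S)$. Applying \eqref{Starvd} with $w=S$ and using the monotonicity \eqref{MM5} (so that $\phi_j(S_1,\dots,\mathbf{V}_i(S),\dots,S_m)\le\phi_j(S)$) together with the elementary inequality $(\phi_j(S)-\mu_j)\phi_j(S)\le(\phi_j(S)/L_j)\cdot L_j(2\phi_j(S)-\mu_j)$, a direct computation yields
\begin{equation*}
0\le S_i-\mathbf{V}_i(S)\le C\,\rho_m,\qquad C:=\max_{1\le j\le M}\phi_j(S)/L_j,
\end{equation*}
uniformly in $i$ for $m\ge 2$; an analogous bound $S_1-\mathbf{V}_1(S)\le C'\rho_1$ is obtained in the one-resource case. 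The constant $C$ is finite because each $L_j>0$ (each $\phi_j$ is nonconstant, $\phi_j(0)=0<\mu_j<\phi_j(S)$) and it depends only on the parameters $\phi_j(S),\mu_j,L_j$, not on $\rho$.

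Finally I would close the argument with a topological step. By continuity of $\phi_j$, the set $\mathscr{R}$ from \eqref{set} is open in $[0,S]$, and the survivability condition \eqref{survav} places $S$ in $\mathscr{R}$. Hence there exists $\epsilon(\mathscr{R})>0$ such that every $v\in[0,S]$ with $\|v-S\|_\infty<\epsilon(\mathscr{R})$ lies in $\mathscr{R}$. Setting
\[
\rho_0(\mathscr{R}):=\min\bigl(1,\,\epsilon(\mathscr{R})/C\bigr)
\]
(with the obvious modification for $m=1$), whenever $\rho\le \rho_0$ the previous estimate gives $\mathbf{V}(S)\in\mathscr{R}$; since $\mathbf{V}(S)\le v^*\le S$ and each $\phi_j$ is nondecreasing by \eqref{MM5}, we obtain $\phi_j(v^*)\ge\phi_j(\mathbf{V}(S))>\mu_j$ for all $j$, i.e.\ $v^*\in\mathscr{R}$, and strong persistence follows. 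The only genuine obstacle is the quantitative estimate in the middle step; it is a routine manipulation once one recognises that the coefficient $L_j(2\phi_j(S)-\mu_j)$ appearing in the definition of $\rho_m$ already dominates $(\phi_j(S)-\mu_j)\phi_j(S)$ up to the universal constant $\phi_j(S)/L_j$.
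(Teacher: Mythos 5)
Your argument is correct and follows the same strategy as the paper's: reduce strong persistence to showing the limiting equilibrium lies in $\mathscr{R}$, bound its distance from $S$ by a multiple of $\rho$, and use that $\mathscr{R}$ is a relatively open neighbourhood of $S$. The only real difference is one of bookkeeping. The paper applies the Lipschitz estimate directly to the fixed point $\xi=\check{0}_{\mathbf{V}}=\hat{0}_{\mathbf{V}}$ via the equation $\mathbf{F}(\xi)=\xi$, obtaining $S_i-\xi_i\le\rho\|\xi\|_\infty<\rho\|S\|_\infty$, and replaces your generic $\epsilon$ by the explicit number $\delta=\sup\{t\ge0:(S_1-t,\dots,S_m-t)\in\mathscr{R}\}$, giving the clean formula $\rho_0=\delta/\|S\|_\infty$. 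You instead pass through the a priori bound $\xi\ge\mathbf{V}(S)$ from \eqref{local1}, estimate $S-\mathbf{V}(S)$ using \eqref{Starvd} with $w=S$, and arrive at the constant $C=\max_j\phi_j(S)/L_j$; this is equally valid, since $(\phi_j(S)-\mu_j)\phi_j(S)<(2\phi_j(S)-\mu_j)\phi_j(S)$ gives a strict inequality and $L_j>0$ because $\phi_j(0)=0<\mu_j<\phi_j(S)$. The paper's version is marginally tighter (it uses $\|\xi\|_\infty<\|S\|_\infty$ rather than $\phi_j(S)/L_j$) and yields a more explicit $\rho_0$, but the two proofs are interchangeable.
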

\begin{proof}
By \eqref{survav}, $S\in \mathscr{R}$, therefore the number
$$
\delta:=\sup\{t\ge 0: (S_1-t,\ldots,S_m-t)\in \mathscr{R}\}
$$
is well defined and positive. Since $\delta\le \|S\|_\infty$, we have $\rho_0:=\delta/\|S\|_{\infty}\le 1$. If $\rho\le \rho_0$ then by Corollary~\ref{cor1} any solution with Cauchy data \eqref{Idata} converges to a unique equilibrium point $0\ll \xi\ll S$ satisfying \eqref{fixedp}. We have for all $1\le i\le m$
\begin{equation}\label{delll}
\begin{split}
S_i-\xi_i&=\sum_{j=1}^M \frac{c_{ij}}{D_i\gamma_j}(\phi_j(\xi)-\mu_j)_+\phi_j(\xi)\equiv
\sum_{j=1}^M \frac{c_{ij}}{D_i\gamma_j}[f_j(\phi_j(\xi))-f_j(0)]\\
&< \rho\|\xi\|_\infty\le \frac{\delta \|\xi\|_\infty}{\|S\|_{\infty}}<\delta
\end{split}
\end{equation}
Therefore  $\xi\in \mathscr{R}$, implying by \eqref{set} and \eqref{xdef} that $\lim_{t\to\infty}x_j(t)=\mathbf{X}_j(\xi)>0$ for all $j$, as desired.
\end{proof}

In general, one has from \eqref{bilateral1}, \eqref{local1} and \eqref{Phi1} the following explicit a priori estimate.

\begin{corollary}
Let $(x(t), v(t))$ be the solution of \eqref{HX1}, \eqref{HV1}, \eqref{Idata} and let the survivability condition \eqref{survav} holds.
Then
\begin{align*}
[\mathbf{F}(S)]_+\le \lim\inf_{t\to\infty}v(t)&\le \lim\sup_{t\to\infty}v(t)\le \mathbf{F}([\mathbf{F}(S)]_+)\\
\mathbf{X}([\mathbf{F}(S)]_+)\le \lim\inf_{t\to\infty}x(t)&\le \lim\sup_{t\to\infty}x(t)\le \mathbf{X}(\mathbf{F}([\mathbf{F}(S)]_+)).
\end{align*}
where $\mathbf{F}_i(S)=S_i-\sum_{j=1}^M\frac{c_{ij}}{D_i\gamma_j} (\phi_j(S)-\mu_j)\phi_j(S)$,  $1\le i\le m$ and $\mathbf{X}$ is defined by \eqref{xdef}.

\end{corollary}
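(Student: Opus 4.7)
The plan is to obtain the required bounds as a direct chain of already established results: the bilateral estimates in Theorem~\ref{th1} express the $\omega$-limit set in terms of the extremal period-two-point $(\check{0}_{\mathbf{V}},\hat{0}_{\mathbf{V}})$ of $\mathbf{V}$, while Proposition~\ref{pro:w} and the chain \eqref{local1} translate these abstract quantities into quantities explicitly computable from $\mathbf{F}$ evaluated at $S$.

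First I would apply Theorem~\ref{th1} to obtain
\[
\check{0}_{\mathbf{V}}\le \liminf_{t\to\infty}v(t)\le \limsup_{t\to\infty}v(t)\le \hat{0}_{\mathbf{V}}.
\]
Next I would invoke the bilateral inequality \eqref{local1}, namely $0\ll \mathbf{V}(S)\le \check{0}_{\mathbf{V}}\le \hat{0}_{\mathbf{V}}\le \mathbf{V}^2(S)\le S$, and then chain this with the estimate \eqref{Phi1} of Proposition~\ref{pro:w}, which gives $[\mathbf{F}(S)]_+\le \mathbf{V}(S)$ and $\mathbf{V}^2(S)\le \mathbf{F}([\mathbf{F}(S)]_+)$. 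Concatenating these three inequalities yields
\[
[\mathbf{F}(S)]_+\le \check{0}_{\mathbf{V}}\le \hat{0}_{\mathbf{V}}\le \mathbf{F}([\mathbf{F}(S)]_+),
\]
which combined with the first display proves the upper line of the corollary.

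For the $x$-inequalities I would first observe that $\mathbf{X}_j(v)=\frac{1}{\gamma_j}(\phi_j(v)-\mu_j)_+$ is a nondecreasing function of $v$ on $[0,S]$, as a direct consequence of the monotonicity assumption \eqref{MM5} on $\phi_j$ and the fact that $t\mapsto (t-\mu_j)_+$ is nondecreasing. Applying this monotonicity coordinatewise to the inclusion $[\mathbf{F}(S)]_+\le \check{0}_{\mathbf{V}}\le \hat{0}_{\mathbf{V}}\le \mathbf{F}([\mathbf{F}(S)]_+)$ gives
\[
\mathbf{X}([\mathbf{F}(S)]_+)\le \mathbf{X}(\check{0}_{\mathbf{V}})\le \mathbf{X}(\hat{0}_{\mathbf{V}})\le \mathbf{X}(\mathbf{F}([\mathbf{F}(S)]_+)),
\]
and inserting this into the bilateral estimate \eqref{bilateral2} yields the lower line of the corollary. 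Finally the explicit formula for $\mathbf{F}_i(S)$ is nothing but \eqref{split} with $J=\{1,\ldots,M\}$ evaluated at $v=S$, using the definition \eqref{xdef} of $\mathbf{X}_j$ together with the survivability condition \eqref{survav} (which ensures $(\phi_j(S)-\mu_j)_+=\phi_j(S)-\mu_j$, so that the positive-part symbol can be dropped in the formula for $\mathbf{F}(S)$).

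There is essentially no serious obstacle here: all the analytic work (construction of $\mathbf{V}$, the period-two-point machinery, the compactness argument for iterations, and the polarization inequality \eqref{Phi}) has been carried out in Sections~\ref{sec:period}, \ref{sec:fixed} and in the proof of Theorem~\ref{th1}. The only care needed is to verify that $\mathbf{X}$ is coordinatewise monotone in $v$ so that the inequalities can be transported from the $v$-level to the $x$-level, and to keep track of the positive parts when writing $\mathbf{F}(S)$ explicitly.
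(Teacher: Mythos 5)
Your proof is correct and follows exactly the route the paper indicates just before the corollary, namely chaining Theorem~\ref{th1} (inequalities \eqref{bilateral1}--\eqref{bilateral2}) with \eqref{local1} and \eqref{Phi1}, then transporting to the $x$-level via the coordinatewise monotonicity of $\mathbf{X}$ and dropping the positive part in $\mathbf{F}(S)$ by \eqref{survav}. No gaps.
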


\section{Discussion}\label{sec:dis}

In this paper we established sufficient conditions for the global stability and persistence of a chemostat like model with self-limitations. For the Liebig-Mondoc model \eqref{Liebm} one has $L_j= r_j/\min_{i}\{K_{ij}\}$ and $\phi_j(S)\le r_j$. It is interesting to compare our result with simulations in \cite{Huis1999} rigourously approved in \cite{Li2001}, see especially Section~5 there. In that example, Huisman and Weissing assume in the present notation that $m=M=3$, $S_j=10$, $r_j=1$, $D_j=0.25$ for all three species and matrices $K_{ij}$ and $c_{ij}$ be chosen as in \cite[p.~38]{Li2001}.  Then if $\gamma_j=0$ then  Theorem~3.1 in \cite{Li2001} implies the existence of a nontrivial periodic oscillation. On the other hand, it follows from \eqref{rhogamma} that  if  $\gamma_j\ge 1.64$, $i=1,2,3$, then any solution is global stable, for arbitrary positive initial data. In general, given arbitrary data, \eqref{rhogamma} explicitly defines the critical values $\gamma_j^*$ such that the system is globally stable for $\gamma_j>\gamma_j^*$.





\def\cprime{$'$}

\end{document}